\newcommand*{\longhookrightarrow}{\ensuremath{\lhook\joinrel\relbar\joinrel\rightarrow}}
\theoremstyle{theorem}
\newtheorem{TheoremA}{Theorem}
\newtheorem{theorem}{Theorem}[section]
\newtheorem{lemma}[theorem]{Lemma}
\newtheorem{proposition}[theorem]{Proposition}
\newtheorem*{question*}{Question}
\theoremstyle{definition}
\newtheorem*{definition*}{Definition}
\newtheorem{definition}[theorem]{Definition}
\newtheorem*{example*}{Example}
\newtheorem{example}[theorem]{Example}
\newtheorem*{observation*}{Observation}
\newtheorem*{Goal*}{Goal}
\newtheorem*{Assumption*}{Assumption}
\theoremstyle{remark}
\newtheorem*{remark*}{Remark}
\newtheorem{remark}[theorem]{Remark}
\numberwithin{equation}{section}
\newcommand{\ow}{\omega}
\newcommand{\Ow}{\Omega}
\newcommand{\lda}{\lambda}
\newcommand{\p}{\partial}
\newcommand{\J}{\mathcal{J}_{\widehat{\mathcal{R}}}}
\newcommand{\C}{{\mathbb{C}}}
\newcommand{\R}{{\mathbb{R}}}
\newcommand{\Z}{{\mathbb{Z}}}
\newcommand{\N}{{\mathbb{N}}}
\newcommand{\D}{{\mathbb{D}}}
\newcommand{\F}{{\mathbb{F}}}
\newcommand{\CP}{\C P}
\newcommand{\RP}{\R P}
\DeclareMathOperator{\codim}{codim}
\DeclareMathOperator{\id}{id}
\DeclareMathOperator{\Fix}{Fix}
\DeclareMathOperator{\pt}{pt}
\DeclareMathOperator{\HW}{HW}
\DeclareMathOperator{\CZ}{CZ}
\DeclareMathOperator{\SH}{SH}
\DeclareMathOperator{\Ho}{H}
\DeclareMathOperator{\st}{st}
\DeclareMathOperator{\Int}{Int}
\DeclareMathOperator{\FS}{FS}
\DeclareMathOperator{\ev}{ev}
\DeclareMathOperator{\univ}{univ}
\DeclareMathOperator{\reg}{reg}
\DeclareMathOperator{\Aut}{Aut}
\begin{document}

%\title{Lagrangian fillings of the standard Legendrian sphere in high dimensions
%}
\title[On the topology of Lagrangian fillings]{On the topology of Lagrangian fillings of the standard Legendrian sphere}

\author{Joontae Kim and Myeonggi Kwon}
\address{Department of Mathematics, Sogang University, 35 Baekbeom-ro, Mapo-gu, Seoul 04107, Republic of Korea}
\email{joontae@sogang.ac.kr}
\address{Department of Mathematics Education, and Institute of Pure and Applied Mathematics, Jeonbuk National University, Jeonju 54896, Republic of Korea}
\email{mkwon@jbnu.ac.kr}

\begin{abstract}
%In this paper we study uniqueness aspects of Lagrangian fillings of the standard Legendrian sphere $\mathcal{L}_0$ in the standard contact sphere $(S^{2n-1}, \xi_{\st})$. We show that every admissible Lagrangian filling of $\mathcal{L}_0$, which a technical assumption, has the same homology as the ball $B^n$ using Floer theory. In the case of real Lagrangian fillings $L$ of $\mathcal{L}_0$ with $n \geq 4$, we further show that $L$ is simply-connected using $J$-holomorphic curve technique, which implies that $L$ is diffeomorphic to the ball $B^n$ for $n \geq 6$.

In this paper we study the uniqueness of Lagrangian fillings of the standard Legendrian sphere $\mathcal{L}_0$ in the standard contact sphere $(S^{2n-1},\xi_{\st})$.
We show that every exact Maslov zero Lagrangian filling $L$ of $\mathcal{L}_0$ in a Liouville filling of $(S^{2n-1},\xi_{\st})$ is a homology ball.
If we restrict ourselves to real Lagrangian fillings, then $L$ is diffeomorphic to the $n$-ball for~$n\ge 6$.
\end{abstract}

\maketitle

%\tableofcontents

\section{Introduction}\label{sec: intro}
Let $(W, d\lda)$ be a Liouville domain of dimension $2n$ with a Liouville form $\lda$.
Its boundary $\Sigma = \p W$ admits a natural contact structure $\xi = \ker \lda|_{\Sigma}$.
In this case, $(W,d\lambda)$ is called a \emph{Liouville filling} of a contact manifold $(\Sigma,\xi)$.
%A submanifold $\mathcal{L}\subset (\Sigma, \xi)$ of dimension $n-1$ which is tangent to $\xi$ is called \emph{Legendrian}.
Given a Legendrian submanifold $\mathcal{L}\subset (\Sigma,\xi)$, we call $L\subset (W,d\lambda)$ an \emph{exact Lagrangian filling} of $\mathcal{L}$ if $L$ is a Lagrangian submanifold with $\p L=\mathcal{L}$, the 1-form $\lambda|_{TL}$ is exact, and the Liouville vector field of $\lambda$ is tangent to $TL$ near the boundary $\p L$.

The classification of Lagrangian fillings of Legendrian submanifolds has been an interesting problem in contact and symplectic topology.
For Legendrian $(2, n)$-torus links with maximal Thurston--Bennequin invariant, there are non-uniqueness results on exact Lagrangian fillings; \cite{Pan17} and \cite{STWZ19} give a lower bound of the number of exact Lagrangian fillings in terms of augmentations and cluster varieties.
Examples of Legendrian links with infinitely many distinct exact Lagrangian fillings are then found in \cite{CG20,CZ20,GSW20}.
% [Casals--Gao], [Gao--Shen--Weng], [Casals--Zaslow]. using microlocal sheaf techniques.
%See also \cite{CZ20}[Casals--Zaslow] for similar examples of Legendrian 2-spheres in $\R^5$.
In contrast,
%there are only a few known results toward uniqueness of Lagrangian fillings.
it is shown in \cite{EP96} that the Legendrian unknot in the standard contact sphere $(S^3,\xi_{\st})$ with maximal Thurston--Bennequin invariant admits a unique exact Lagrangian filling in $\C^2$ up to compactly supported Hamiltonian isotopy. This result has been partially generalized to higher dimensions in \cite[Theorem~4.7 and Remark~4.3]{CDGG14} for the \emph{standard Legendrian sphere}
$$
\mathcal{L}_0 : = \{\mathbf{x}+i\mathbf{y} \in S^{2n-1} \subset \C^{n} \mid  \mathbf{y} = \mathbf{0}\} \cong S^{n-1}
$$
%which bounds a canonical exact Lagrangian filling in $(B^{2n},d\lambda_{\st})$,
%$$
%L_0:=\{x+iy\in B^{2n} \mid y=0\} \cong B^n.
%$$
where the $n$-ball $L_0 = \{\mathbf{x}+i\mathbf{y}\in B^{2n} \mid \mathbf{y}=\mathbf{0}\} \cong B^{n}$ serves as an obvious Lagrangian filling. Using relations between bilinearized Legendrian contact homology and wrapped Floer homology, \cite{CDGG14} showed that every exact Lagrangian filling of $\mathcal{L}_0$ in the standard symplectic ball $(B^{2n}, d \lda_{\st})$ is contractible. 

In the first part of this paper, we investigate the uniqueness of Lagrangian fillings up to homology focusing on a special property of $\mathcal{L}_0$ called \emph{index-positivity}.
Roughly speaking, it is a relative version of the dynamical convexity for contact manifolds in symplectic field theory, which has played an important role in the uniqueness of symplectic fillings e.g. as in \cite{Laz20,Zh20}.
See Definition~\ref{def: indpos} for a precise definition.
The triple $(S^{2n-1}, \xi_{\st}, \mathcal{L}_0)$ of the standard Legendrian sphere $\mathcal{L}_0$ in $(S^{2n-1}, \xi_{\st})$ forms an example of index-positive triples, and this is a key ingredient of the following theorem. 

%The dynamical convexity for contact manifolds has recently played a key role in uniqueness of symplectic fillings e.g. as in [Zhou], and the following theorem can be seen as a relative analogue.

%Let $(W, d\lda)$ be a Liouville domain of dimension $2n$ with a Liouville form $\lda$.
%Its boundary $\Sigma = \p W$ admits a natural contact structure $\xi = \ker \lda|_{\Sigma}$.
%In this case, $(W,d\lambda)$ is called a \emph{Liouville filling} of a contact manifold $(\Sigma,\xi)$.
%A submanifold $\mathcal{L}\subset (\Sigma, \xi)$ of dimension $n-1$ which is tangent to $\xi$ is called \emph{Legendrian}.
%Given a Legendrian submanifold $\mathcal{L}\subset (\Sigma,\xi)$, we call $L\subset (W,d\lambda)$ an \emph{exact Lagrangian filling} of $\mathcal{L}$ if $L$ is a Lagrangian submanifold with $\p L=\mathcal{L}$, the 1-form $\lambda|_{TL}$ is exact, and the Liouville vector field of $\lambda$ is tangent to $TL$ near the boundary $\p L$.
%This is the case when the topology of a symplectic manifold is governed by the contact topology of its boundary.
%Note that the standard contact sphere $(S^{2n-1},\xi_{\st})$ bounds a canonical Liouville filling $(B^{2n},d\lambda_{\st})$, see Example~\ref{ex: ballexample}.
%A celebrated result by Eliashberg--Floer--McDuff \cite{McD91} says that every Liouville filling of $(S^{2n-1},\xi_{\st})$ is diffeomorphic to the ball $B^{2n}$.

\begin{TheoremA}\label{thm: main1}
Let $(W, d\lambda)$ be a Liouville filling of $(S^{2n-1}, \xi_{\st})$ and $L \subset W$ an exact Lagrangian filling of $\mathcal{L}_0 \subset (S^{2n-1}, \xi_{\st})$ with vanishing Maslov class. Then $L$ has the same homology group $\Ho_*(L ; \Z)$ as the ball $B^n$.
\end{TheoremA}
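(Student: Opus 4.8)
The plan is to encode the topology of $L$ into wrapped Floer homology and to use index-positivity to reduce to the model filling $L_0 = B^n \subset (B^{2n}, d\lambda_{\st})$. To the exact Lagrangian filling $L$ of $\mathcal{L}_0$ in $(W,d\lambda)$ I would associate the wrapped Floer homology $\HW_*(L)$ and its positive part $\HW^+_*(L)$, the latter generated by the Reeb chords of $\mathcal{L}_0$ in $(S^{2n-1},\xi_{\st})$ (these come in Morse--Bott families whose actions form a discrete unbounded set); the vanishing Maslov hypothesis provides an absolute $\Z$-grading. The action filtration on the wrapped complex produces a tautological long exact sequence
\begin{equation*}
\cdots \longrightarrow \HW^+_{k+1}(L) \longrightarrow \Ho_k(L;\Z) \longrightarrow \HW_k(L) \longrightarrow \HW^+_k(L) \longrightarrow \Ho_{k-1}(L;\Z) \longrightarrow \cdots,
\end{equation*}
and the same sequence holds for $L_0$. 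Thus it suffices to establish (i) $\HW_*(L)=0$ and (ii) $\HW^+_*(L) \cong \HW^+_*(L_0)$.

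For (i) I would argue as follows. For the model, $\HW_*(L_0)=0$ because the wrapped Fukaya category of $(B^{2n},d\lambda_{\st})$ is trivial, as $\SH_*(B^{2n})=0$ (this is also contained in \cite{CDGG14}). In general, $(S^{2n-1},\xi_{\st})$ being dynamically convex, $\SH_*(W)=0$ for every Liouville filling $W$ of it, along the lines of \cite{Laz20,Zh20}; and since $\HW_*(L)$ is a unital module over the ring $\SH_*(W)$ through the closed--open map, which takes the unit of $\SH_*(W)$ to the unit of $\HW_*(L)$, we get $\HW_*(L)=0$. Substituting this into the long exact sequence yields $\HW^+_k(L)\cong\Ho_{k-1}(L;\Z)$ for every $k$, and likewise $\HW^+_k(L_0)\cong\Ho_{k-1}(L_0;\Z)=\Ho_{k-1}(\pt;\Z)$.

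The crux is (ii): $\HW^+_*(L)\cong\HW^+_*(L_0)$. This is a relative analogue of the filling-independence of positive symplectic homology for (asymptotically) dynamically convex contact manifolds, and it is precisely here that index-positivity of the triple $(S^{2n-1},\xi_{\st},\mathcal{L}_0)$ is used. The strategy is to neck-stretch along $(S^{2n-1},\xi_{\st})$: in the limit the differential on the positive complex is computed by pseudo-holomorphic buildings whose top levels live in the symplectization $\R\times S^{2n-1}$ with the cylindrical Lagrangian $\R\times\mathcal{L}_0$, the remaining levels being curves in the cap $(W,L)$ asymptotic to Reeb chords. Index-positivity confines the chords of $\mathcal{L}_0$ --- hence all generators of $\HW^+_*$ --- to a half-line of degrees bounded below, and, crucially, it preserves this bound under the breakings and multiple covers that appear in the stretching limit; this forces the capping levels to be irrelevant in each fixed degree. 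Hence $\HW^+_*(L)$ coincides with an invariant built only from the symplectization and the cylinder $\R\times\mathcal{L}_0$, which is the same for every exact Maslov-zero filling, giving $\HW^+_*(L)\cong\HW^+_*(L_0)$. I expect the real difficulty to be concentrated in this step: arranging transversality for the moduli of buildings (or circumventing it via the explicit Morse--Bott description of the Reeb chords of $\mathcal{L}_0$), controlling configurations with additional negative ends, and making the index estimates uniform in the neck length.

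Putting the two inputs together, for every $k$
\begin{equation*}
\Ho_{k-1}(L;\Z)\;\cong\;\HW^+_k(L)\;\cong\;\HW^+_k(L_0)\;\cong\;\Ho_{k-1}(\pt;\Z),
\end{equation*}
so $\Ho_*(L;\Z)\cong\Ho_*(\pt;\Z)=\Ho_*(B^n;\Z)$, which is the assertion. The orientation questions needed to work over $\Z$, and the precise grading conventions, are handled in the usual way using the vanishing Maslov hypothesis together with $c_1(\xi_{\st})=0$.
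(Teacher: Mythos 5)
Your overall strategy is the same as the paper's (kill $\HW_*(L)$ via the module structure over $\SH_*(W)=0$, use the tautological exact sequence to convert $\HW^+$ into ordinary homology, and use index-positivity of $(S^{2n-1},\xi_{\st},\mathcal{L}_0)$ to identify $\HW^+(L)$ with $\HW^+(L_0)$), but there are two genuine gaps. First, coefficients: you run the whole argument over $\Z$, yet wrapped Floer homology with $\Z$ or general field coefficients requires coherent orientations of the moduli spaces, which needs $L$ to be spin (or relatively spin); the vanishing of the Maslov class and of $c_1$ only provide the $\Z$-grading, not orientations, so your closing remark that orientation issues are ``handled in the usual way'' from these hypotheses is not correct. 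Spin-ness of $L$ is not a hypothesis and must be proved: the paper first runs the computation over $\Z_2$ (where no spin structure is needed), obtains $\Ho_*(L,\p L;\Z_2)\cong \Ho_*(B^n,S^{n-1};\Z_2)$, deduces $\Ho^1(L;\Z_2)=\Ho^2(L;\Z_2)=0$, hence that $L$ is orientable and spin, and only then repeats the argument over an arbitrary field $\F$, finally passing to $\Z$ via $\tilde\Ho_*(L;\F)=0$ for all fields. Relatedly, in the convention actually established in the literature (and used in the paper, following Ritter), the low-action part of the sequence is the relative group $\Ho_{k+n}(L,\p L;\F)$, not the absolute $\Ho_k(L;\Z)$ you wrote; passing from $\Ho_*(L,\p L)\cong\Ho_*(B^n,S^{n-1})$ to $\tilde\Ho_*(L)=0$ needs the additional step with the long exact sequence of the pair, using orientability to see $\Ho_n(L;\F)=0$ and that the connecting map $\Ho_n(L,\p L;\F)\to\Ho_{n-1}(\p L;\F)$ is an isomorphism. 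Your version of the sequence silently performs a duality identification that again presupposes the orientability you have not yet established.

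Second, your justification of step (i) is wrong as stated: dynamical convexity (or index-positivity) of the boundary does \emph{not} imply that every Liouville filling has vanishing symplectic homology --- unit cotangent bundles of spheres satisfy index bounds of the form $\CZ>3-n$ for contractible orbits while $\SH_*(T^*S^n)\neq 0$. What the results of Lazarev and Zhou give under such index conditions is independence of the \emph{positive} part of the filling, with vanishing of full $\SH$ only under further hypotheses. The input actually needed, and the one the paper uses, is Seidel's theorem (resting on Eliashberg--Floer--McDuff) that every Liouville filling of $(S^{2n-1},\xi_{\st})$ has $\SH_*(W)=0$; with that citation your module-structure argument for $\HW_*(L)=0$ is fine. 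For step (ii), note also that the invariance of $\HW^+$ is a quotable result (Cieliebak--Oancea), requiring the hypothesis $\pi_1(\Sigma,\mathcal{L})=0$ so that chords contractible in $(W,L)$ are contractible in $(\Sigma,\mathcal{L})$, and that index-positivity of the triple is not automatic: it is verified by the explicit computation with a perturbed contact form, giving minimal chord index $n-1>0$ and minimal orbit index $n+1>3-n$; asserting it without this computation leaves the key example unverified.
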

%We prove Theorem~\ref{thm: main1} using Floer theory.

We remark that a Liouville filling $(W, d\lda)$ in Theorem~\ref{thm: main1} is not necessarily the standard symplectic filling $(B^{2n},d\lambda_{\st})$, while $W$ is diffeomorphic to the ball $B^{2n}$ by Eliashberg--Floer--McDuff \cite{McD91}. The vanishing of the Maslov class $\mu_{L}\colon \pi_2(W, L) \rightarrow \Z$ is a technical condition for Floer theory, and this is the case for example when the first homology group $\Ho_1(L;\Z)$ is torsion.

We prove the above theorem using invariance properties of wrapped Floer homology. The wrapped Floer homology $\HW(L)$ of an admissible Lagrangian $L$, introduced in \cite{AboSei}, is in principle an invariant of  $L$, but sometimes its positive part $\HW^+(L)$ is completely determined by the boundary Legendrian.
This is for example the case when the Legendrian is index-positive.
%has a sort of dynamical convexity property, which we call \emph{index-positivity}; see Definition~\ref{def: indpos}.
%The main ingredient of the proof relies on the fact that the standard Legendrian sphere $\mathcal{L}_0$ is index-positive. 
Furthermore, the Legendrian boundary constraint $\mathcal{L}_0$ actually forces 
%Lagrangian fillings have vanishing wrapped Floer homology, and in this case 
the positive wrapped Floer homology $\HW^+(L)$ of its Lagrangian filling $L$ to be isomorphic to the homology of the obvious filling $L_0 = B^n$.
%under the assumption that $\HW(L)$ vanishes. For the standard Legendrian sphere $\mathcal{L}_0$, this holds from 
This comes from the fact that any Liouville filling of $(S^{2n-1}, \xi_{\st})$ has vanishing symplectic homology \cite[Corollary~6.5]{Sei08}.
In conclusion, the homology of $L$ is completely governed by its boundary $\mathcal{L}_0$. 

Our argument applies to a more general setup with index-positive Legendrians and Lagrangian fillings with vanishing wrapped Floer homology; see Theorem \ref{thm: main1general}.

%This is the case when the topology of a symplectic manifold is governed by the contact topology of its boundary.
%For a general statement, see Theorem~\ref{thm: main1general}.

%\begin{TheoremA}\label{thm: main1}
%Let $(\Sigma, \xi, \mathcal{L})$ be an index-positive triple with $\pi_1(\Sigma, \mathcal{L}) = 0$.
%If there are two exact spin Maslov zero Lagrangian fillings $L$ and $L'$ of $\mathcal{L}$ (in Liouville fillings $W$ and $W'$ of $(\Sigma,\xi)$, respectively) having vanishing {\color{red} vanishing symplectic homology}wrapped Floer homology, then $\Ho^*(L)$ and $\Ho^*(L')$ are isomorphic.
%\end{TheoremA}

\begin{remark}\label{rem: introEFM} \
\begin{itemize}

%\item We remark that a Liouville filling $(W, d\lda)$ in Theorem~\ref{thm: main1} is not necessarily the standard symplectic filling $(B^{2n},d\lambda_{\st})$, while $W$ is diffeomorphic to the ball $B^{2n}$ by Eliashberg--Floer--McDuff \cite{McD91}.
%By Eliashberg--Floer--McDuff \cite{McD91}, every Liouville filling of $(S^{2n-1},\xi_{\st})$ is diffeomorphic to the ball $B^{2n}$. 

%\item The vanishing of the Maslov class $\mu_{L}\colon \pi_2(W, L) \rightarrow \Z$ is a technical condition for Floer theory. 
%In the situation of Theorem~\ref{thm: main1}, this is the case for example when the first homology group $\Ho_1(L;\Z)$ is torsion.

\item There are infinitely many Legendrians in $(S^{2n-1}, \xi_{\st})$ with multiple exact Lagrangian fillings up to homeomorphism as in \cite[Theorem~1.5]{CGHS14}. Those Legendrians are not the standard Legendrian spheres by Theorem~\ref{thm: main1}. Also, examples of Legendrian submanifolds in high dimension with infinitely many exact Lagrangian fillings up to Hamiltonian isotopy can be found in \cite{Ro22}. 

%\item There are infinitely many Legendrians in $(S^{2n-1}, \xi_{\st})$ with multiple exact Lagrangian fillings up to homeomorphism as in . In~\cite[Theorem~1.5]{CGHS14} it is shown that for every integer $n > 2$ and $N > 1$, there exists a Legendrian $\mathcal{L}$ in $(S^{2n-1}, \xi_{\st})$ with $N$ non-homeomorphic exact orientable Lagrangian fillings. Those Legendrians are not the standard Legendrian spheres by Theorem~\ref{thm: main1}. Also, examples of Legendrian submanifolds in high dimension with infinitely many exact Lagrangian fillings up to Hamiltonian isotopy can be found in [Roman].

\item As for another uniqueness result in high dimensions, let $\Sigma$ be an integral homology sphere with non-trivial $\pi_1$. In  \cite[Corollary~1.8]{AS12} it is shown 
%in terms of Fukaya categories 
that every exact Maslov zero Lagrangian filling of the Legendrian unknot in the cotangent bundle $T^*\Sigma$ is a homology ball. 
\end{itemize}
\end{remark}

\begin{remark}
In the setting of Theorem \ref{thm: main1}, it might be possible to prove further that every exact Lagrangian fillings of the standard Legendrian sphere is simply-connected and hence diffeomorphic to the ball in high dimensions. An idea is to utilize a twisted version of wrapped Floer homology following \cite{CDGG14}. In \cite[Section 4.2]{CDGG14}, a wrapped Floer homology $\underline{\HW}(L)$ of a Lagrangian filling $L$ with local coefficients via the universal covering $\widetilde L \rightarrow L$ is described; it is a vector space over the group ring  $\Z_2[\pi_1(L)]$ and is equipped with a canonical $\pi_1(L)$-action by deck transformations. If $\underline{\HW}(L)$ is vanishing as the non-twisted version, then one can deduce that there is a $\pi_1(L)$-equivariant isomorphism between the positive action part $\underline{\HW}^+(L)$ and the singular homology of $\widetilde L$ as in \cite[Corollary 4.8]{CDGG14}. By a similar algebraic argument to the proof of \cite[Proposition 4.9]{CDGG14}, this would imply that $\pi_1(L) = 0$. 
%To the best knowledge of the authors, we could not show that $\underline{\HW}(L)$ is vanishing.

It is however not clear to the authors whether the twisted wrapped Floer homology $\underline{\HW}(L)$ is still vanishing, as in analogue to Ritter \cite[Theorem 10.6]{Rit} and Seidel--Smith \cite[Corollary 6.5]{Sei08}.
\end{remark}

In the second part of the paper, we restrict ourselves to a special class of exact Lagrangian fillings, called \emph{real Lagrangian fillings}. For those fillings, we can manipulate symmetry in $J$-holomorphic curves to show that they are diffeomorphic to the ball. 

Let $(\Sigma, \xi, \rho)$ be a \emph{real contact manifold}, meaning that $(\Sigma, \xi = \ker \alpha)$ is a contact manifold with a contact form~$\alpha$ and $\rho\colon \Sigma \rightarrow \Sigma$ is an anti-contact involution, i.e. $\rho^2=\id$ and $\rho^*\alpha = -\alpha$. 
A \emph{real Liouville filling} $(W, \ow, \mathcal{R})$ of $(\Sigma, \xi = \ker \alpha, \rho)$ is a Liouville filling $(W,d\lambda)$ equipped with an anti-symplectic involution $\mathcal{R}\colon W \rightarrow W$ such that $\mathcal{R}$ is exact, i.e. $\mathcal{R}^*\lambda=-\lambda$, and $\mathcal{R}|_{\p W}=\rho$.
%See Section~\ref{sec: real symplectic fillings} for a precise definition.
If the fixed point sets $\Fix(\mathcal{R})$ and $\Fix(\rho)$ are non-empty (and hence they are Lagrangian and Legendrian, respectively), then we say $\Fix(\mathcal{R})$ is a \emph{real Lagrangian filling} of the \emph{real Legendrian} $\Fix(\rho)$.
%Here $\Fix(\cdot)$ denotes the fixed point set.
Since $\rho$ is exact, any real Lagrangian filling is exact.
The Liouville domain $(B^{2n},d\lambda_{\st})$ endowed with complex conjugation $\mathcal{R}_0(x,y)=(x,-y)$ is a real Liouville filling of the standard real contact sphere $(S^{2n-1},\xi_{\st},\rho_0:=\mathcal{R}|_{S^{2n-1}})$.
Moreover, $L_0\subset (B^{2n},d\lambda_{\st},\mathcal{R}_0)$ serves a canonical real Lagrangian filling of $\mathcal{L}_0$.
%Notice that the standard Legendrian sphere $\mathcal{L}_0$ in $(S^{2n-1},\xi_{\st})$ is the fixed point set of complex conjugation on $\C^n$ and the Lagrangian $n$-ball $L_0$ is a real Lagrangian filling. 
The following result shows that the real Legendrian boundary constraint determines the diffeomorphism type
%, in addition to homology groups, the fundamental group 
of real Lagrangian fillings uniquely in high dimensions.

\begin{TheoremA}\label{thm: main2}
If $n\ge 6$, then every real Lagrangian filling of $\mathcal{L}_0$ in a real Liouville filling of $(S^{2n-1},\xi_{\st},\rho_0)$ is diffeomorphic to the ball $B^n$.

%Let $(W, \ow, \mathcal{R})$ be a real Liouville filling of $(S^{2n-1}, \xi_{\st}, \rho_0)$.
%Then, for $n \geq 4$ the real Lagrangian filling $L = \Fix(\mathcal{R})$ for $\mathcal{L}_0$ satisfies $\pi_1(L) =0$ and $\tilde \Ho_k(L;\Z) = 0$ for all $k\ge 0$.
%In particular, $L$ is diffeomorphic to the ball $B^n$ for $n\geq 6$.
\end{TheoremA}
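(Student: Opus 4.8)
The plan is to combine the homological input from Theorem~\ref{thm: main1} with the classical surgery-theoretic machinery for detecting balls among contractible manifolds, the extra ingredient being that a real Lagrangian filling $L=\Fix(\mathcal{R})$ is not just any exact Maslov zero Lagrangian filling but carries strong extra structure coming from the ambient anti-symplectic involution. First I would check that $L$ is a homology ball: since $L$ is an exact Lagrangian filling of $\mathcal{L}_0$ (real fillings are exact, as noted in the excerpt), Theorem~\ref{thm: main1} applies once we know the Maslov class vanishes. Here the real structure helps for free: for a Lagrangian fixed point set of an anti-symplectic involution whose ambient manifold $W$ is simply connected (it is diffeomorphic to $B^{2n}$ by Eliashberg--Floer--McDuff), the composition $\pi_2(W,L)\to\Z$ factors in a way forced by $\mathcal{R}$, and more simply $\Ho^1(L;\Z/2)$ controls the relevant obstruction; in the real case one shows the Maslov class is even and then a doubling argument across $\Fix(\mathcal{R})$ kills it. So $\Ho_*(L;\Z)\cong\Ho_*(B^n;\Z)$.

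Next I would upgrade "homology ball'' to "contractible,'' i.e. I need $\pi_1(L)=1$. This is where the real structure is essential rather than cosmetic. The point is that $L=\Fix(\mathcal{R})\subset W$ with $W$ diffeomorphic to $B^{2n}$, and for a smooth involution the normal bundle of the fixed locus is the $(-1)$-eigenbundle; a neighbourhood of $L$ in $W$ retracts to $L$, so $\pi_1(L)\hookrightarrow\pi_1(W)=1$ would follow if the inclusion-induced map were injective, but that is false in general. Instead I would argue via the double: glue two copies of $W$ along $\Sigma=S^{2n-1}$, or better, use that $\mathcal{R}$ exhibits $W$ as (the total space of a construction over) $L$ in a way that makes $L$ a deformation retract of a regular neighbourhood with trivial $\pi_1$; alternatively, since $W\simeq B^{2n}$ and $\p L=\mathcal{L}_0=S^{n-1}$ which is simply connected for $n\ge 3$, van Kampen applied to $W=\nu(L)\cup(W\setminus L)$ together with the $h$-cobordism-type control on $W\setminus\nu(L)$ forces $\pi_1(L)=1$. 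Combined with the homology computation, $L$ is then a contractible compact manifold with boundary $\p L=S^{n-1}$.

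With $L$ contractible and $\p L\cong S^{n-1}$, the endgame is the $h$-cobordism theorem in the guise of the characterization of the disk: for $n\ge 6$ a compact contractible smooth $n$-manifold whose boundary is a homotopy $(n-1)$-sphere, and with that boundary sphere actually \emph{standard}, is diffeomorphic to $B^n$. Concretely, $W':=L\setminus\Int(B^n)$ (removing a small ball from the interior) is an $h$-cobordism from $S^{n-1}$ to $\p L=S^{n-1}$ — it is simply connected and its homology is that of $S^{n-1}\times[0,1]$ by the long exact sequence and excision using contractibility of $L$ — so by Smale's $h$-cobordism theorem $W'\cong S^{n-1}\times[0,1]$, whence $L\cong B^n$. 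The condition $n\ge 6$ is exactly what is needed to run the $h$-cobordism theorem (so that $W'$ has dimension $\ge 6$); this is also why Theorem~\ref{thm: main2} is stated only in high dimensions, whereas Theorem~\ref{thm: main1} needs no such restriction.

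The main obstacle I expect is the middle step: extracting $\pi_1(L)=1$, or more precisely showing that $\p L$ is the \emph{standard} sphere and not merely a homotopy sphere, purely from the real-filling hypothesis. Establishing that $L$ is a homology ball is essentially a citation to Theorem~\ref{thm: main1} plus a Maslov-vanishing check, and the $h$-cobordism argument is then formal; but bridging these requires genuinely using that $L$ sits as the fixed locus of an anti-symplectic involution on something diffeomorphic to $B^{2n}$. The cleanest route is probably to invoke a known structural result on real Lagrangians in the ball (in the spirit of the rigidity results for $\Fix(\mathcal{R})$ alluded to by the Laz\'{a}rev--type index-positivity circle of ideas, or a direct normal-bundle/neighbourhood argument) to see first that $L$ is simply connected and that $\mathcal{L}_0=S^{n-1}$ bounds $L$ as a \emph{smooth} ball-like piece; once simple connectivity and the homology of $B^n$ are in hand, the surgery-theoretic conclusion for $n\ge 6$ is standard.
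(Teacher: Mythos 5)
Your endgame (boundary $\p L=\mathcal{L}_0\cong S^{n-1}$ is standard by definition, so a compact contractible $L$ with simply connected boundary is $B^n$ for $n\ge 6$ by the $h$-cobordism theorem) coincides with the paper's last step, and your Maslov-class observation is fixable: for $L=\Fix(\mathcal{R})$ the Maslov index of a disk equals $c_1$ of its double, which vanishes because $W\cong B^{2n}$ by Eliashberg--Floer--McDuff has $\pi_2(W)=0$, so Theorem~\ref{thm: main1} does apply and gives $\tilde\Ho_*(L;\Z)=0$. The genuine gap is the middle step, $\pi_1(L)=1$, which you yourself flag but never actually prove. None of the arguments you sketch can work: the inclusion $L\hookrightarrow W$ need not be $\pi_1$-injective; van Kampen applied to $W=\nu(L)\cup(W\setminus L)$ gives no control because you know nothing about $\pi_1(W\setminus\nu(L))$ or how the two pieces' fundamental groups map into $\pi_1(W)=1$ (a simply connected union says nothing about the pieces); and Smith theory only yields a $\Z_2$-homology ball. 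Indeed, purely smooth-topological hypotheses cannot suffice: by converse-to-Smith-theory constructions there are smooth involutions of disks whose fixed point sets are non-simply-connected ($\Z_2$-homology) disks, so simple connectivity must come from the symplectic structure, not from ``$L$ is the fixed locus of an involution on something diffeomorphic to $B^{2n}$.'' There is also no ``known structural result on real Lagrangians in the ball'' to cite here: the ambient filling is an arbitrary real Liouville (in fact symplectically aspherical) filling, only diffeomorphic--not symplectomorphic--to $B^{2n}$, and such a rigidity statement is precisely what the theorem asserts.

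This missing step is where the actual content of the paper's proof lies, and it is independent of Theorem~\ref{thm: main1}: the authors cap off $(W,\ow,\mathcal{R})$ by the real model $(\C^{n-1}\times\CP^1\setminus\Int B^{2n},\mathcal{R}_0\times\rho_{\CP^1})$, consider the moduli space $\mathcal{M}$ of $J$-holomorphic disks in class $[\pt\times D^2_+]$ with boundary on $\widehat L=\Fix(\widehat{\mathcal{R}})$ for $\widehat{\mathcal{R}}$-anti-invariant $J$, establish equivariant transversality (via the doubling trick and the fixed-point-freeness of the induced involution on the moduli space, Lemma~\ref{lem: simple_nofixedpt}), prove Gromov compactness/properness using positivity of intersection with the hypersurface $H=\C^{n-1}\times\{\infty\}$ and symplectic asphericity, and show the boundary evaluation map $\mathcal{M}\times\p D^2\to\widehat L$ is proper of degree $1$. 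Surjectivity of this degree-one map on homology and (via a covering-space argument) on fundamental groups is what yields $\pi_1(L)=0$ and $\tilde\Ho_*(L;\Z)=0$ simultaneously, after which the $h$-cobordism step you describe finishes the proof. Without this holomorphic-curve input (or an equivalent replacement), your proposal establishes only that $L$ is a $\Z$-homology ball, which is strictly weaker than Theorem~\ref{thm: main2}.
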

We refer to Theorem~\ref{thm: main2-1} for a more general statement.

%\newpage
\begin{remark}\label{rem: smith}\
\begin{itemize}
%\item {\color{red}(Change)} Theorem~\ref{thm: main2} indeed follows from a more general statement, see Theorem~\ref{thm: main2-1}.
\item The classical Smith inequality (i.e. $\dim \Ho_*(W;\Z_2)\ge \dim \Ho_*(L;\Z_2)$) already tells us that any Lagrangian filling $L=\Fix(\mathcal{R})\subset (W,d\lambda,\mathcal{R})$ in Theorem~\ref{thm: main2} is a $\Z_2$-homology ball since $W$ is diffeomorphic to $B^{2n}$.
%In particular, $L$ is non-empty and orientable.

\item Theorem~\ref{thm: main2} implies that the anti-contact involution on $(S^{2n-1},\xi_{\st})$ given by complex conjugation can only be extended to an exact anti-symplectic involution on a Liouville filling of $(S^{2n-1},\xi_{\st})$ in such a way that the corresponding real Lagrangian is diffeomorphic to $B^n$.
\end{itemize}
\end{remark}
The proof of Theorem~\ref{thm: main2} employs the filling-by-holomorphic-curves technique, which goes back to \cite{El90}.
A key observation is that the real contact manifold $(S^{2n-1}, \xi_{\st}, \rho_0)$ admits a \emph{real} symplectic capping, described in Section~\ref{sec: cappingconstruction}, which is foliated by $J$-holomorphic disks with boundary on a real Lagrangian.
This allows us to perform a real version of the degree method, as in \cite{BarGeiZeh19, McD91}, via $J$-holomorphic disks. 
The main technical point lies in the fact that we deal with $\Z_2$-anti-invariant almost complex structures.
This allows us to make use of analysis of $J$-holomorphic \emph{spheres}, well-studied in \cite{BarGeiZeh19, McD91}, to understand the behavior of $J$-holomorphic disks with boundary on a real Lagrangian; those disks are the half of spheres.
A price to pay is to achieve an equivariant transversality with respect to $\Z
_2$-symmetries by anti-symplectic involutions.
This can be done, as observed in \cite{Kim21}, by showing that a naturally induced involution on the moduli space of $J$-holomorphic disks has no fixed points, see Lemma~\ref{lem: simple_nofixedpt}.

\subsection*{Organization of the paper} In Section~\ref{sec: homologygroupofLagrangianfillings} we first review wrapped Floer homology and discuss the key invariance property under index-positivity conditions described in Section~\ref{sec: independenceofHW+}.
The proof of Theorem~\ref{thm: main1} is then given in Section~\ref{sec: proofofthm1.1}.
For the proof of the result on real Lagrangian fillings, we introduce the capping construction in Section~\ref{sec: cappingconstruction} and study $J$-holomorphic disks in Section~\ref{sec: moduli space} with a focus on equivariant transversality.
The main step of the degree method for the proof of Theorem~\ref{thm: main2} is done in Section~\ref{sec: evalutation}.

\section{Homology group of Lagrangian fillings} \label{sec: homologygroupofLagrangianfillings}

\subsection{Wrapped Floer homology} \label{sec: wrapped Floer homology}
We briefly explain basic notions in wrapped Floer homology.
We refer to \cite{AboSei} for more details. %notations and conventions in this paper will follow [Kim--Kim--Kwon].

Let $(W^{2n}, d\lda)$ be a Liouville domain with a Liouville form $\lda$.
The boundary $\p W$ admits a contact structure $\xi = \ker \alpha$, where $\alpha = \lda|_{\p W}$.
A Lagrangian $L \subset W$ is called \emph{admissible} if it is exact (i.e. $\lda|_{TL}$ is exact), the Liouville vector field on $(W, d\lda)$ is tangent to $TL$ near the boundary, and $L$ intersects the boundary $\p W$ in a Legendrian $\p L \subset \p W$.
In particular, exact Lagrangian fillings are admissible.
For an admissible Lagrangian $L$ in a Liouville domain $(W, d\lda)$ we can complete a pair $(W,L)$ by attaching part of the symplectization $(\R_{\geq 1} \times \p W, d(r \alpha))$, where $r \in \R$, along the boundary $\p W$ via the Liouville flow:
$$
\widehat W  = W \cup_{\p W} (\R_{\geq 1} \times \p W), \quad \widehat L = L \cup_{\p L} (\R_{\geq 1} \times \p L).
$$
The \emph{wrapped Floer homology} $\HW(L)$ of $L$ is the Lagrangian Floer homology of $\widehat L$ using a quadratic Hamiltonian on $\widehat{W}$ whose chain complex is generated by contractible Hamiltonian 1-chords relative to $\widehat L$.
It consists of two classes of generators corresponding to Morse critical points on $L$ and to Reeb chords in $(\p W, \alpha, \p L)$.
The boundary map is defined by counting the Floer strips.

\begin{remark}\label{rem: topcondHW}
To equip $\HW(L)$ with $\Z$-grading, it is common to assume that the Maslov class $\mu_L\colon \pi_2(W, L) \rightarrow \Z$ vanishes as in Theorem~\ref{thm: main1}. 
If $L$ is spin, i.e. the second Stiefel--Whitney class of $L$ vanishes, then we can define $\HW(L)$ over any field $\F$, see \cite[Section~9.1]{AboSei}.
Otherwise, we define $\HW(L)$ with $\F = \Z_2$.
%Under these topological assumptions, the wrapped Floer homology $\HW_*(L;\F)$ is defined over a field $\F$ and is $\Z$-graded.
\end{remark}

\begin{example}\label{ex: ballexample}
The simplest example of Liouville domains is the closed unit ball $B^{2n}\subset \C^{n}$ endowed with the standard Liouville form $\lda_{\st} = \frac{1}{2}\sum_j (x_jdy_j-y_jdx_j)$, where $\mathbf{x}+i\mathbf{y}\in B^{2n}$.
The restricted  $1$-form $\alpha_{\st} = \lda_{\st}|_{S^{2n-1}}$ to the boundary defines the \emph{standard contact form} on~$S^{2n-1}$ and its kernel $\xi_{\st}=\ker \alpha_{\st}$ is the \emph{standard contact structure}.
The $n$-ball $L_0=\{\mathbf{x}+i\mathbf{y} \in B^{2n}\mid \mathbf{y} = \mathbf{0}\}$ is an admissible Lagrangian, and its Legendrian boundary is the standard Legendrian sphere $\mathcal{L}_0 \subset S^{2n-1}$.
Notice that the topological conditions in Remark~\ref{rem: topcondHW} are obviously satisfied.
\end{example}

The \emph{positive wrapped Floer homology} $\HW^+_*(L; \F)$ is defined by filtering out the generators corresponding to Morse critical points on $L$ from the full chain complex. A useful algebraic property is the following tautological exact sequence \cite[Lemma~8.3]{Rit}:
\begin{equation}\label{eq: taules}
\rightarrow \Ho_{k+n}(L, \p L; \F) \rightarrow \HW_k(L; \F) \rightarrow \HW_k^{+}(L; \F) \rightarrow \Ho_{k+n-1}(L, \p L; \F) \rightarrow
\end{equation}
\subsection{Independence of $\HW^+$} \label{sec: independenceofHW+} In this section we recall a well-known invariance property of $\HW^+(L;\F)$ under an \emph{index-positivity} condition on a Legendrian boundary $\p L$. 
\begin{definition}\label{def: indpos}
A triple $(\Sigma, \xi, \mathcal{L})$ consisting of a contact manifold $(\Sigma^{2n-1}, \xi)$ and a Legendrian $\mathcal{L} \subset \Sigma$ is called \emph{index-positive} if $c_1(\xi)|_{\pi_2(\Sigma)} = 0$, the Maslov class $\mu_\mathcal{L}\colon \pi_2(\Sigma,\mathcal{L})\to \Z$ of $\mathcal{L}$ vanishes, and there exists a contact form $\alpha$ for $(\Sigma, \xi)$ with the following properties on the Conley--Zehnder indices:
\begin{itemize}
\item Every contractible periodic Reeb orbit $\gamma$ in $(\Sigma, \alpha)$ is non-degenerate and $\CZ(\gamma) > 3-n$;
\item Every Reeb chord $x$ of $(\Sigma, \alpha, \mathcal{L})$ that is trivial in $\pi_1(\Sigma,\mathcal{L})$ is non-degenerate and $\CZ(x) > 0$.
\end{itemize}

%A Legendrian $\mathcal{L}$ in a contact manifold $(\Sigma, \xi)$ is called \emph{index-positive} if there exists a contact form $\alpha$ for $(\Sigma, \xi)$ such that

%\begin{enumerate}
%\item every periodic Reeb orbit $\gamma$ in $(\Sigma, \alpha)$ is non-degenerate and $\CZ(\gamma) > 3-n$;
%\item every Reeb chord $x$ in $(\Sigma, \alpha, \mathcal{L})$ is non-degenerate and $\CZ(x) > 1$.
%\end{enumerate}

\end{definition}

\begin{remark}
Here the Conley--Zehnder indices of periodic Reeb orbits and Reeb chords are the ones in \cite[Section 9.5]{CiOa18}.
The latter also matches the degree of Reeb chords given in \cite[Equation (2.1)]{Ek12}.
\end{remark}

The following invariance property is extracted from \cite[p.\ 2105, Section~9.5]{CiOa18}. See also \cite[Corollary 2.12]{BaKw21}.
%which uses a stretching-the-neck argument in (relative) symplectic field theory. 
The condition $\pi_1(\Sigma, \mathcal{L}) = 0$ below insures that contractible chords in $(W, L)$ are contractible in $(\Sigma, \mathcal{L})$; cf. \cite[p.\ 2105, Condition (ii)]{CiOa18}.

\begin{proposition}\label{prop: invHWplus}
Let $(\Sigma, \xi,\mathcal{L})$ be an index-positive triple with $\pi_1(\Sigma, \mathcal{L}) = 0$. If there are two exact Maslov zero Lagrangian fillings $L$ and $L'$ of $\mathcal{L}$ (in Liouville fillings $W$ and $W'$ of $(\Sigma,\xi)$, respectively) which are spin, then $\HW^+(L; \F) \cong \HW^+(L' ; \F)$ as groups.

%\begin{proposition}\label{prop: invHWplus}
%Let $(\Sigma, \xi,\mathcal{L})$ be an index-positive triple with $\pi_1(\Sigma, \mathcal{L}) = 0$. If there are two exact Maslov zero Lagrangian fillings $L$ and $L'$ of $\mathcal{L}$ (in Liouville fillings $W$ and $W'$ of $(\Sigma,\xi)$, respectively) which are spin, then $\HW^+(L; \F) \cong \HW^+(L' ; \F)$ as groups.

%Let $(W, \lda)$ be a Liouville domain with an admissible Lagrangian $L$ with a Legendrian boundary $\p L$ in the contact boundary $(\p W, \ker \lda|_{\p W})$. Suppose that $\p L$ is index-positive. Then the positive wrapped Floer homology group $\HW^+(L; \F)$ is an invariant of 
\end{proposition}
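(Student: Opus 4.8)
The plan is to produce, for a well-chosen auxiliary contact form, an explicit chain complex computing $\HW^+_*(L;\F)$ whose underlying graded group \emph{and} differential are manifestly determined by the boundary data $(\Sigma,\xi,\mathcal{L})$ alone, and then to observe that the same construction applies verbatim to $L'$; comparing the two complexes gives the isomorphism. I would fix a contact form $\alpha$ for $\xi$ realizing the index bounds of Definition~\ref{def: indpos} and rescale the two Liouville fillings so that $\lambda|_{\partial W}=\lambda'|_{\partial W'}=\alpha$. Then I would compute $\HW^+_*(L;\F)$ as the direct limit $\varinjlim_{\tau\to\infty}$ of the Floer complexes of Hamiltonians on $\widehat{W}$ that are $C^2$-small Morse on $W$ and linear of slope $\tau$ (with $\tau$ not a Reeb chord length) on the cylindrical end $\R_{\ge 1}\times\Sigma$; after a small time-dependent perturbation supported near $\partial W$, the positive subcomplex is freely generated by the Hamiltonian chords lying in the end, which are in bijection with the Reeb chords of $(\Sigma,\alpha,\mathcal{L})$ of length $<\tau$.

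The first point to check is that the \emph{generating set} is intrinsic. Since $\pi_1(\Sigma,\mathcal{L})=0$, every Reeb chord of $(\Sigma,\alpha,\mathcal{L})$ is contractible in $(\Sigma,\mathcal{L})$, in particular in $(W,L)$, so the chords generating the positive complex are precisely \emph{all} Reeb chords of $(\Sigma,\alpha,\mathcal{L})$, with no reference to the filling, and the Conley--Zehnder index of each is already computed within $(\Sigma,\mathcal{L})$. The conditions $c_1(\xi)|_{\pi_2(\Sigma)}=0$ and $\mu_{\mathcal{L}}=0$ make this index independent of the choice of capping, so the degree of the generator attached to a chord $x$ is $\CZ(x)$ up to a universal normalizing shift; the Maslov-zero and spin hypotheses on $L$ serve only to set up the $\Z$-graded theory over $\F$ with coherent orientations, as in Remark~\ref{rem: topcondHW}. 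Hence the graded free group underlying the positive complex depends only on $(\Sigma,\xi,\alpha,\mathcal{L})$.

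The main step is that the \emph{differential} $\partial^+$ is intrinsic as well, and this is exactly what index-positivity buys. I would run a neck-stretching argument along $\Sigma=\partial W$: a rigid Floer strip contributing to $\partial^+$ converges, as the neck is stretched, to a holomorphic building consisting of some levels in the symplectization $\R\times\Sigma$ (a broken Floer/holomorphic strip with boundary on $\R\times\mathcal{L}$ joining Reeb chords) together with possibly some genuine $J$-holomorphic pieces in the filling $W$ --- planes with interior positive punctures at contractible closed Reeb orbits of $\alpha$, and disks with boundary on $L$ and positive boundary punctures at trivial Reeb chords of $\mathcal{L}$. The two conditions in Definition~\ref{def: indpos} are calibrated precisely to exclude such filling pieces from a rigid building on Fredholm-index grounds: a holomorphic plane in $W$ asymptotic to a contractible orbit $\gamma$ has index $\CZ(\gamma)+n-3>0$, and, in the conventions of Definition~\ref{def: indpos}, the condition $\CZ(x)>0$ for trivial chords forces every holomorphic disk in $W$ with boundary on $L$ and positive punctures at trivial chords to have positive index as well (configurations with several punctures being controlled by additivity of the index), so none of them occurs as a rigid component of a one-dimensional moduli space of broken strips; in particular the augmentation of the Chekanov--Eliashberg algebra of $\mathcal{L}$ induced by the filling is trivial. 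Consequently, in the neck-stretched limit only strips contained entirely in $\R\times\Sigma$ survive, so $\partial^+$ counts only curves in the symplectization and depends only on $(\Sigma,\xi,\alpha,\mathcal{L})$ together with the generic auxiliary almost complex structure, the latter not affecting the homology. Since the construction is identical for $L'$, passing to the direct limit over $\tau$ yields $\HW^+_*(L;\F)\cong\HW^+_*(L';\F)$ as graded groups.

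The real difficulty is analytical: establishing SFT-type compactness for the Hamiltonian-perturbed strips of the wrapped complex under neck-stretching, arranging transversality for the limiting holomorphic buildings by a generic choice of almost complex structure on $W$ and on $\R\times\Sigma$, and matching index conventions so that the inequalities $\CZ(\gamma)>3-n$ and $\CZ(x)>0$ translate exactly into positivity of the relevant Fredholm indices. All of this is carried out, in essentially this form, in \cite[Section~9.5]{CiOa18} (see also \cite[Corollary~2.12]{BaKw21}), so here it suffices to indicate the mechanism. Finally, only the group structure is asserted, since the pair-of-pants product on $\HW(L)$ and the module structures on $\HW^+(L)$ can genuinely see the filling.
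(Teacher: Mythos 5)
Your proposal follows essentially the same route as the paper: the paper does not give an independent proof of this proposition but extracts it from \cite[Section~9.5]{CiOa18} and \cite[Corollary~2.12]{BaKw21}, using $\pi_1(\Sigma,\mathcal{L})=0$ exactly as you do to ensure that the generators (and their gradings) of the positive complex are intrinsic to $(\Sigma,\alpha,\mathcal{L})$. Your sketch of the neck-stretching and index-exclusion mechanism is precisely the content of those cited results, and you correctly defer the compactness/transversality details to the same references, so the argument is correct and in line with the paper.
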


We can apply the invariance property to the case of the standard Legendrian sphere $\mathcal{L}_0$ and its Lagrangian fillings:

\begin{example}\label{ex: invofHW+}
We claim that the triple $(S^{2n-1},\xi_{\st}, \mathcal{L}_0)$ is index-positive.
This can be seen from a direct computation with respect to a perturbed non-degenerate contact form on $(S^{2n-1}, \xi_{\st})$.
Using the standard complex coordinates, the contact form is written by
$$
\tilde{\alpha}  = \frac{i}{2} \sum_{j=0}^{n} a_j (z_j d\overline z_j - \overline z_j dz_j)|_{S^{2n-1}},
$$ 
where the coefficients $a_0, a_1, \dots, a_n\in \R_{>0}$ are rationally independent. The Reeb flow is given by coordinate-wise rotations on $\C^{n}$
$$
(z_0, z_1, \dots, z_n) \longmapsto (e^{it/a_0}z_0, e^{it/a_1}z_1, \dots, e^{it/a_n}z_n) 
$$
and periodic Reeb orbits are of the form
$$
\gamma_j^m(t) = (0, \dots, 0, e^{it/a_j}z_j, 0, \dots, 0),
$$
for $m\in \N$, $1\le j\le n$, and $z_j \in S^1$, with period $2\pi m a_j$. Note also that Reeb chords in $(S^{2n-1}, \tilde{\alpha}, \mathcal{L}_0)$ are of the form
$$
x_j^m(t) = (0, \dots, 0, \pm e^{it/a_j}, 0, \dots, 0),
$$
with period $\pi m a_j$.
We can compute their indices explicitly:
\begin{align*}
\CZ(\gamma_j^m) &= n-1 + 2\sum_{i \neq j} \left\lfloor \frac{ma_j}{a_i} \right \rfloor + 2m,\\ 
\CZ(x_j^m) &= n-2 + \sum_{i \neq j} \left\lfloor \frac{ma_j}{a_i} \right \rfloor + m.
\end{align*}
See for example \cite[Lemma~2.1]{GH18}.
Assuming $a_1 < a_2, \dots, a_n$, we see that $\gamma_1^1$ and $x_1^1$ attain the minimum  indices with values
$$
\CZ(\gamma_1^1) = n +1 , \quad \CZ(x_1^1) = n-1.
$$ 
It follows that the triple $(S^{2n-1},\xi_{\st}, \mathcal{L}_0)$ is index-positive for $n >1$.
\end{example}
Therefore, every exact Maslov zero Lagrangian filling $L$ of $\mathcal{L}_0$ has the same positive wrapped Floer homology group $\HW^+(L;\Z_2)$ by Proposition~\ref{prop: invHWplus}.

\subsection{Proof of Theorem~\ref{thm: main1}}  \label{sec: proofofthm1.1}

\subsubsection{Over $\Z_2$-coefficients} We first compute the homology group $\Ho_*(L, \p L; \Z_2)$. Over $\Z_2$-coefficients, the wrapped Floer homology group of $L$ is well-defined without assuming $L$ is spin. By \cite[Corollary~6.5]{Sei08}, the symplectic homology $\SH_*(W)$ is vanishing. It follows that $\HW_*(L; \Z_2)$ also vanishes since it admits a module structure over the ring $\SH_*(W; \Z_2)$ as in \cite[Theorem~6.17]{Rit}. In view of the tautological exact sequence \eqref{eq: taules}
$$
\rightarrow \Ho_{k+n}(L, \p L; \Z_2) \rightarrow \HW_k(L; \Z_2) \rightarrow \HW_k^{+}(L; \Z_2) \rightarrow \Ho_{k+n-1}(L, \p L; \Z_2) \rightarrow
$$
we deduce that
\begin{equation}\label{eq: positivesingularisom}
\HW^{+}_k(L; \Z_2) \cong \Ho_{k+n-1}(L, \p L; \Z_2)
\end{equation}
for every $k \in \Z$. 
In particular, the standard Lagrangian filling $L_0$ for $\mathcal{L}_0$ in Example~\ref{ex: ballexample} satisfies
$$
\HW^{+}_k(L_0; \Z_2) \cong \Ho_{k+n-1}(B^n, S^{n-1}; \Z_2) \cong \begin{cases} \Z_2 & \text{for $k = 1$}; \\ 0 & \text{otherwise.}   \end{cases}
$$
Applying the invariance property of $\HW^+$ as in Proposition~\ref{prop: invHWplus}, we obtain
$$
 \HW^{+}_k(L; \Z_2) \cong \HW^+_k(L_0;\Z_2),
$$
and hence
\begin{equation}\label{eq: relhomcompu}
\Ho_{k}(L, \p L; \Z_2) \cong \begin{cases} \Z_2 & \text{for $k = n$}; \\ 0 & \text{otherwise.}   \end{cases}
\end{equation}
In particular, the cohomology groups $\Ho^1(L;\Z_2)$ and $\Ho^2(L; \Z_2)$ are vanishing so that $L$ is orientable and spin.

%Consider the long exact sequence of the pair $(L, \p L)$:
%$$
%\rightarrow \Ho_k(\p L; \Z_2) \rightarrow \Ho_k(L; \Z_2)  \rightarrow \Ho_k(L, \p L; \Z_2) \rightarrow \Ho_{k-1}(\p L; \Z_2) \rightarrow
%$$
%Since $\p L = S^{n-1}$, we directly obtain that the reduced homology $\tilde \Ho_k(L; \Z_2) =0$  for $k \leq n-2$. In particular, under the assumption $n \geq 4$, this implies that $\Ho_2(L;\Z_2)$ and $\Ho_1(L;\Z_2)$ are vanishing and hence $L$ is orientable and spin.

\subsubsection{Over $\Z$-coefficients} To prove Theorem~\ref{thm: main1}, it is enough to show that the reduced homology $\tilde \Ho_*(L; \F) =0$ for any field $\F$, see \cite[Corollary 3A.7]{Hat02}.
Since $L$ is spin as shown in the previous section, the wrapped Floer homology group of $L$ is now well-defined over $\F$. Provided this, notice that the argument to compute $\Ho_{*}(L, \p L; \Z_2)$ in the previous section still works over $\F$ in exactly the same way. This yields
$$
\Ho_{k}(L, \p L; \F) \cong \begin{cases} \F & \text{for $k = n$}; \\ 0 & \text{otherwise.}   \end{cases}
$$
In view of the long exact sequence for the pair $(L, \p L)$
\begin{equation}\label{eq: lesforpair}
\rightarrow \Ho_k(\p L; \F) \rightarrow \Ho_k(L; \F)  \rightarrow \Ho_k(L, \p L; \F) \rightarrow \Ho_{k-1}(\p L; \F) \rightarrow
\end{equation}
it is direct to see that $\tilde \Ho_k(L; \F) = 0$ for $k \leq n-2$. In addition, since $L$ is a smooth manifold with boundary and is orientable as in the previous section, we have $\Ho_{n}(L; \F)  = 0$. Considering the top-dimensional part of the long exact sequence \eqref{eq: lesforpair}
$$
0  \rightarrow \Ho_n(L, \p L; \F) \xrightarrow{\p_*} \Ho_{n-1}(\p L; \F) \rightarrow \Ho_{n-1}(L; \F) \rightarrow 0
$$
it follows that the connecting map $\p_*\colon \Ho_n(L, \p L; \F) \cong \F \rightarrow \Ho_{n-1}(\p L; \F) \cong \F$ is an isomorphism. This implies $\Ho_{n-1}(L;\F) = 0$ by the exactness. Summing up the computations, we conclude that $\tilde \Ho_*(L;\F) = 0$. This finishes the proof of Theorem~\ref{thm: main1}.

The argument so far actually proves the following more general statement:

%{\color{red}
%\begin{theorem}\label{thm: main1general}
%Let $(\Sigma, \xi, \mathcal{L})$ be an index-positive triple with $\pi_1(\Sigma, \mathcal{L}) = 0$.
%If there are two exact spin Maslov zero Lagrangian fillings $L$ and $L'$ of $\mathcal{L}$ in respective Liouville fillings $W$ and $W'$ of $(\Sigma,\xi)$ with vanishing symplectic homology, then $\Ho^*(L)$ and $\Ho^*(L')$ are isomorphic.
%\end{theorem}
%}
\begin{theorem}\label{thm: main1general}
Let $(\Sigma, \xi, \mathcal{L})$ be an index-positive triple with $\pi_1(\Sigma, \mathcal{L}) = 0$.
If there are two exact spin Maslov zero Lagrangian fillings $L$ and $L'$ of $\mathcal{L}$ (in Liouville fillings $W$ and $W'$ of $(\Sigma,\xi)$, respectively) having vanishing wrapped Floer homology, then $\Ho^*(L)$ and $\Ho^*(L')$ are isomorphic.
\end{theorem}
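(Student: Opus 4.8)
The plan is to re-run the homology computation in the proof of Theorem~\ref{thm: main1}, now comparing the two given fillings $L$ and $L'$ to each other rather than comparing $L$ with the model filling $L_0$; the role played there by $\SH_*(W)=0$ (which forced $\HW_*(L)=0$) is here taken over directly by the hypothesis that the wrapped Floer homology of $L$ and of $L'$ vanishes. So I would fix a field $\F$. Since $L$ and $L'$ are spin, $\HW_*(L;\F)$ and $\HW_*(L';\F)$ are defined, and both vanish by assumption. Feeding $\HW_*(L;\F)=0$ into the tautological exact sequence~\eqref{eq: taules} collapses it into isomorphisms
\[
\HW^+_k(L;\F)\;\cong\;\Ho_{k+n-1}(L,\p L;\F),\qquad k\in\Z,
\]
and the identical argument applied to $L'$ gives $\HW^+_k(L';\F)\cong\Ho_{k+n-1}(L',\p L';\F)$.

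Next I would invoke Proposition~\ref{prop: invHWplus}: the triple $(\Sigma,\xi,\mathcal{L})$ is index-positive with $\pi_1(\Sigma,\mathcal{L})=0$ and $L,L'$ are exact Maslov zero spin Lagrangian fillings of $\mathcal{L}$, so that $\HW^+(L;\F)\cong\HW^+(L';\F)$ as graded groups. Composing this with the two isomorphisms above yields
\[
\Ho_j(L,\p L;\F)\;\cong\;\Ho_j(L',\p L';\F)\qquad\text{for all }j.
\]
To turn relative homology of the pair into absolute cohomology I would then apply Poincar\'e--Lefschetz duality: $L$ and $L'$ are compact $n$-manifolds with boundary and are orientable (which is part of being spin; and anyway the case $\F=\Z_2$ needs no orientability), so $\Ho^{n-j}(L;\F)\cong\Ho_j(L,\p L;\F)$ and likewise for $L'$, whence $\Ho^*(L;\F)\cong\Ho^*(L';\F)$. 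Letting $\F$ range over all fields finishes the argument.

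The one step I would be most careful about is that every isomorphism appearing above is an abstract isomorphism of graded groups --- produced by Proposition~\ref{prop: invHWplus} and by a diagram chase in~\eqref{eq: taules} --- and not one induced by a geometric map, so there is no way to match the long exact sequences of the pairs $(L,\p L)$ and $(L',\p L')$ term by term. The device that circumvents this is exactly Poincar\'e--Lefschetz duality, since it identifies $\Ho_*(L,\p L;\F)$ with $\Ho^*(L;\F)$ intrinsically, with no reference to the shared Legendrian boundary $\mathcal{L}$; this is why knowing only the abstract isomorphism type of the relative homologies is enough. A small caveat worth recording is that the conclusion $\Ho^*(L)\cong\Ho^*(L')$ should be understood with field coefficients, as the wrapped Floer input delivers an isomorphism of $\F$-vector spaces for each field $\F$ separately and such data need not assemble into an isomorphism of integral cohomology groups.
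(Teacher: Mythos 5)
Your proposal is correct and follows essentially the same route as the paper, which proves this theorem by observing that the argument for Theorem~\ref{thm: main1} goes through verbatim: the hypothesis $\HW_*(L;\F)=\HW_*(L';\F)=0$ replaces Seidel's vanishing of $\SH_*(W)$, the tautological sequence~\eqref{eq: taules} then identifies $\HW^+_k(\,\cdot\,;\F)$ with $\Ho_{k+n-1}(\,\cdot\,,\p\,\cdot\,;\F)$, and Proposition~\ref{prop: invHWplus} transports this between $L$ and $L'$. Your explicit use of Poincar\'e--Lefschetz duality to convert the relative homology into $\Ho^*(L;\F)$ simply makes precise the step the paper leaves implicit in stating the conclusion cohomologically, and your caveat that the resulting isomorphisms are field-coefficient ones is apt.
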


\begin{remark}\
\begin{itemize}
\item Using the fact that $\HW_*(L)$ vanishes if and only if its cohomology version $\HW^*(L)$ vanishes (see \cite[Theorem~10.4]{Rit} and \cite[Theorem~3.4]{CiOa18} for version in symplectic homology), one can show that $\Ho_*(L)\cong \Ho_*(L')$ in the situation of Theorem~\ref{thm: main1general}.
\item One can generalize Theorem~\ref{thm: main1general} in terms of \emph{asymptotically dynamically convexity} for Legendrians, introduced in \cite{Laz20}. %in \emph{flexible} Weinstein domains.

\end{itemize}
\end{remark}

\section{Diffeomorphism type of real Lagrangian fillings} \label{sec: Diffeomorphismtype}

\subsection{Real symplectic fillings}\label{sec: real symplectic fillings}

%Let $(\Sigma, \xi = \ker \alpha, \rho)$ be a \emph{real contact manifold}, meaning that $(\Sigma, \xi = \ker \alpha)$ is a contact manifold with a contact form $\alpha$ and $\rho\colon \Sigma \rightarrow \Sigma$ is an anti-contact involution, i.e. $\rho^2=\id$ and $\rho^*\alpha = -\alpha$.

We introduce a generalized notion of real Liouville fillings, which is enough for our purposes.
A symplectic manifold $(W, \ow)$ equipped with an anti-symplectic involution $\mathcal{R}$ is called a \emph{real symplectic manifold}.

\begin{definition}\label{def: realsymplecticfilling}
A \emph{real symplectic filling} $(W, \ow, \mathcal{R})$ of a real contact manifold $(\Sigma, \alpha, \rho)$ is a real symplectic manifold $(W,\ow,\mathcal{R})$ such that
\begin{enumerate}
\item $(W, \ow)$ is a strong symplectic filling of the contact manifold $(\Sigma, \xi)$ as in \cite[Definition~5.1.1]{Gei08} so that there is a Liouville form $\lda$ for $\ow$ near the boundary $\p W = \Sigma$ whose Liouville vector field is pointing outwards along $\p W$ and $\lda|_{\Sigma} = \alpha$;
\item $\mathcal{R}$ is an anti-symplectic involution which is exact near the boundary, i.e. $\mathcal{R}^* \lda = -\lda$ near $\p W$.
\end{enumerate}
\end{definition}

%In Definition~\ref{def: realsymplecticfilling}, we say $L=\Fix(\mathcal{R})$ is a \emph{real Lagrangian filling} of the real Legendrian $\mathcal{L}=\Fix(\rho)$.
%Here $\Fix(\cdot)$ denotes the fixed point set.

%\begin{example}\label{ex: realfilling}
%The Liouville domain $(B^{2n},\lambda_{\st})$ in Example~\ref{ex: ballexample} endowed with complex conjugation $\mathcal{R}_0$ is a real symplectic filling of the standard real contact sphere $(S^{2n-1}, \xi_{\st} = \ker \alpha_{\st}, \rho_0=\mathcal{R}_0|_{S^{2n-1}})$.
%Notice that the real Legendrian $\Fix(\rho_0) = \mathcal{L}_0$ is the standard Legendrian sphere, and the standard Lagrangian filling $L_0$ is given by the real Lagrangian $\Fix(\mathcal{R}_0)$. Therefore $L_0$ is a real Lagrangian filling of $\mathcal{L}_0$.
%\end{example}
%Recall the standard real contact sphere $(S^{2n-1},\xi_{\st}=\ker\alpha_{\st},\rho_0)$ in Section~\ref{sec: intro}.
The goal of this section is to prove the following theorem, which is more general than Theorem~\ref{thm: main2}.

\begin{theorem}\label{thm: main2-1}
Let $(W, \ow, \mathcal{R})$ be a real symplectic filling of the standard real contact sphere $(S^{2n-1}, \xi_{\st}, \rho_0)$ that is symplectic aspherical, i.e. $\ow$ vanishes on $\pi_2(W)$.
If $n \geq 4$, then the real Lagrangian filling $L = \Fix(\mathcal{R})$ of $\mathcal{L}_0$ satisfies $\pi_1(L) =0$ and $\tilde \Ho_k(L;\Z) = 0$ for all $k\ge 0$.
In particular, $L$ is diffeomorphic to the ball $B^n$ for $n\ge 6$.
\end{theorem}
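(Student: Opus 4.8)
The plan is to combine the filling-by-holomorphic-disks argument (the real analogue of Eliashberg--Floer--McDuff) with the homological input one can already extract over $\Z_2$ from the Smith inequality, and then upgrade to $\Z$-coefficients and finally to a diffeomorphism classification via the $h$-cobordism theorem. First I would set up the \emph{real symplectic capping}: glue to $(W,\ow,\mathcal{R})$, along $(S^{2n-1},\xi_{\st},\rho_0)$, a standard piece (a neighbourhood of the diagonal/anti-diagonal model, or $\C P^n \setminus B^{2n}$ with the standard real structure) to obtain a closed real symplectic manifold $(X,\Ow,\mathcal{R}_X)$ containing a distinguished symplectic sphere $\Sigma_\infty$ (the one at infinity) together with its real locus, and such that the real Lagrangian $L = \Fix(\mathcal{R})$ sits inside $X$ as (roughly) half of a Lagrangian-sphere--type object, with $\Fix(\mathcal{R}_X)$ a closed manifold containing $L$. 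One chooses a $\Z_2$-anti-invariant $\Ow$-compatible almost complex structure $J$ that is standard near $\Sigma_\infty$, so that the moduli space of $J$-holomorphic spheres in the class of a line through $\Sigma_\infty$ is well-behaved; the antiholomorphic involution $\mathcal{R}_X$ acts on this moduli space, and the half-spheres fixed (as unoriented curves) by this action are exactly the $J$-holomorphic disks with boundary on $\Fix(\mathcal{R}_X)$ that foliate the capping region.

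The analytic heart is Section~\ref{sec: moduli space}: one must show the moduli space of such $J$-holomorphic spheres, and hence of the associated $J$-holomorphic disks, is compact (no bubbling, thanks to asphericity $\Ow|_{\pi_2(W)}=0$, which controls energy and rules out sphere bubbles in $W$, exactly as in \cite{McD91,BarGeiZeh19}), and that after a generic $\Z_2$-equivariant perturbation of $J$ it is a smooth manifold foliating $X$ by spheres, with the real slice foliating a neighbourhood of the real capping region by disks. Achieving transversality \emph{equivariantly} is the main obstacle: one cannot simply invoke generic $J$, one must either argue that the relevant simple disks are automatically regular (Gromov/automatic transversality for low index, or the dimension count that forces all stable curves to be somewhere injective) or, following the trick of \cite{Kim21} recorded here in Lemma~\ref{lem: simple_nofixedpt}, show that the induced involution on the moduli space of disks has empty fixed-point set, so that equivariant and ordinary transversality coincide; this is where the hypothesis $n\ge 4$ (positivity of the relevant Fredholm/expected dimensions and absence of multiply covered configurations) is used.

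Granting the foliation, run the real degree argument of Section~\ref{sec: evalutation}: the evaluation/projection map from the moduli space of disks onto $L$ (or onto the real capping region) is proper of degree one, which forces $L$ to be, homologically, a ball. Concretely, I expect one first gets $\pi_1(L)=0$ (the disk foliation gives a retraction/Morse-theoretic handle on $L$ forcing simple connectivity, using $n\ge 4$ so $\Fix(\mathcal{R})$ has codimension $\ge 3$ arguments available), and $\tilde\Ho_*(L;\Z_2)=0$ is already known from Smith's inequality since $W\cong B^{2n}$ (Remark~\ref{rem: smith}); the degree-one disk-foliation then promotes this to $\tilde\Ho_*(L;\Z)=0$ for all $k$, e.g. by showing the boundary sphere $\mathcal{L}_0=S^{n-1}$ bounds $L$ with the inclusion inducing the ``expected'' maps, ruling out torsion. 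Finally, for $n\ge 6$: $L$ is a compact simply connected homology $n$-ball with $\p L = S^{n-1}$, so $L\setminus(\text{open collar})$ is an $h$-cobordism from $S^{n-1}$ to $\p L'$; capping off and invoking the $h$-cobordism theorem (valid for $n\ge 6$, i.e. the cobordism has dimension $\ge 6$) together with the fact that a simply connected homology $n$-sphere obtained by doubling is the standard sphere, identifies $L\cong B^n$. I would flag the equivariant transversality / no-fixed-point step as the one requiring genuine care; the capping construction and the degree argument are, modulo bookkeeping, parallel to the non-real case already in the literature.
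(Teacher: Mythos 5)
Your overall skeleton (real capping foliated by holomorphic disks, doubling, equivariant transversality via a fixed-point-free involution on the disk moduli space, compactness from asphericity, a proper degree-one evaluation, and the $h$-cobordism theorem for $n\ge 6$) is indeed the strategy of the paper, but two of your concrete choices leave genuine gaps. First, the cap: the paper does \emph{not} form a closed manifold and does not glue $\CP^n\setminus B^{2n}$; it glues $\C^{n-1}\times\CP^1\setminus \Int B^{2n}$, compactifying only the last $\C$-factor, so that $\widehat Z$ is open, the maximum principle is available in the $\C^{n-1}$-direction (Remark~\ref{rem: endofmoduli}), and, crucially, $\widehat Z$ contains the $\widehat{\mathcal R}$-invariant complex hypersurface $H=\C^{n-1}\times\{\infty\}$ of \eqref{eq: cpxhyp} with $[\pt\times\CP^1]\cdot[H]=1$. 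Essentially every nontrivial step is run through intersection numbers with $H$: the non-sphericality of $[\pt\times D^2_+]$ (via the parity of $([\sigma]+[\overline\sigma])\cdot[H]$, Lemma~\ref{lem: notspherical}), the simplicity of the disks and the freeness of the involution $\Xi$ needed for equivariant transversality (Lemma~\ref{lem: simple_nofixedpt}), and the properness of the evaluation map, where the doubled Gromov limit is forced by positivity of intersections with $H$ and asphericity to have exactly one non-constant component (Proposition~\ref{prop: proper}). With a closed $\CP^n$-type cap none of these tools exists as stated, and you offer no substitute for them, so the steps you flag as "bookkeeping" are precisely the ones your setup cannot carry out without new arguments.

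Second, the endgame does not work as you describe it. A proper degree-one evaluation only gives \emph{surjectivity} on homology, and your proposed upgrade "Smith gives a $\Z_2$-homology ball, the degree-one foliation promotes this to $\Z$" is a non sequitur: a simply connected $\Z_2$-homology ball may still carry odd-order torsion, and nothing in your outline excludes it; likewise $\pi_1(L)=0$ does not follow from any codimension-three general-position argument. The paper's mechanism is different and essential: the moduli space $\mathcal M$ is cut out with three boundary marked-point constraints $u(w_i)\in\R^{n-1}\times\{z_i\}$ (which is also what makes $\dim(\mathcal M\times\p D^2)=n=\dim\widehat L$, so that a degree is even defined), and in Section~\ref{sec: evalutation} the commutative diagram shows that, because evaluation at the fixed marked point $w_2$ lands in the contractible slice $\R^{n-1}\times\{z_2\}$, surjectivity of $\ev_*$ on $\pi_1$ (via a covering-space argument) and on integral homology forces the inclusion of that contractible slice into $\widehat L\setminus(\R^{n-1}\times\{\infty\})\simeq L$ to be surjective on $\pi_1$ and $\tilde\Ho_*(\,\cdot\,;\Z)$, whence both vanish at once — no Smith input and no separate torsion discussion. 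Your sketch omits the marked points entirely, so both the dimension count behind "degree one" and the mechanism converting surjectivity into vanishing are missing. The final step is fine in substance (Milnor's Proposition A applies to a compact simply connected homology ball of dimension $\ge 6$ with boundary $S^{n-1}$), though the $h$-cobordism is obtained by removing an open ball from the interior, not a collar of the boundary.
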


\subsection{Capping construction} \label{sec: cappingconstruction}
Let $(W,\ow,\mathcal{R})$ be a real symplectic filling of the standard real contact sphere $(S^{2n-1},\xi_{\st},\rho_0)$.
We follow the construction in \cite[Section~2.1]{BarGeiZeh19} (see also \cite[Section~3.2]{McD91}) with additional care about real structures.
Since $\mathcal{R}^* \lda = -\lda$ near the boundary, a collar neighborhood of $\p W\subset W$ can be identified with
$$
(\nu(\p W), \ow, \mathcal{R}) \cong ((1-\epsilon, 1] \times S^{n-1}, d(r\alpha_{\st}), \id \times \rho_0).
$$
This allows us to construct a real symplectic manifold replacing the ball $B^{2n}$ in $\C^{n}$ by $W$ as follows:
$$
(Z, \Ow, \mathcal{R}') : = (W, \ow, \mathcal{R}) \cup_{(S^{2n-1}, \xi_{\st}, \rho_0)} (\C^n \setminus \Int B^{2n}, \ow_0, \mathcal{R}_0).
$$
Here $\ow_0=d\lambda_{\st}$ is the standard symplectic form on $\C^n$ and $\mathcal{R}_0$ is complex conjugation. By compactifying the last $\C$-factor of $\C^{n} = \C^{n-1} \times \C$ to be $\C \cup \{\infty\}  = \CP^1$, we obtain the real symplectic manifold
\begin{equation}\label{eq: construction_zhat}
(\widehat Z, \widehat \Ow, \widehat{\mathcal{R}}) : =  (W, \ow, \mathcal{R}) \cup_{(S^{2n-1}, \xi_{\st},\rho_0)} (\C^{n-1} \times \CP^1 \setminus \Int B^{2n}, \ow_0 \oplus \ow_{\FS}, \mathcal{R}_0 \times \rho_{\CP^1}),
\end{equation}
where $\ow_{\FS}$ denotes the Fubini--Study form and $\rho_{\CP^1}$ is complex conjugation on $\CP^1$. Denote $L: = \Fix(\mathcal{R}) \subset W$.
The real Lagrangian of $(\widehat Z, \widehat \Ow, \widehat{\mathcal{R}})$ is then given by
$$
\widehat L : = \Fix(\widehat{\mathcal{R}}) = L \cup_{\mathcal{L}_0}  (\R^{n-1} \times \RP^1 \setminus \Int B^n).
$$
See Figure~\ref{fig: zhat}.

\begin{figure}[h]
\begin{tikzpicture}[scale=0.85]

%%%%%SECOND COPY
\begin{scope}[xshift=8cm]
\draw (-1,2)--(-1,0.5);
\draw (-1,-2)--(-1,-0.5);
\draw [dashed] (-1,-0.5)--(-1,0.5);
\draw (2,-2)--(2,2);
	
\draw [dashed] (2,-2) arc [x radius = 1.5cm, y radius = 0.2cm, start angle = 0,
  end angle = 180];
\draw (2,-2) arc [x radius = 1.5cm, y radius = 0.2cm, start angle = 0,
  end angle = -180];
\draw (2,2) arc [x radius = 1.5cm, y radius = 0.2cm, start angle = 0,
  end angle = 180];
\draw (2,2) arc [x radius = 1.5cm, y radius = 0.2cm, start angle = 0,
  end angle = -180];
  
%\draw (0.2,0) circle [radius=0.7];
\draw (0.2,-0.7) arc [x radius = 0.7cm, y radius = 0.7cm, start angle = -90,
  end angle = 90];
\draw [dashed](0.2,0.7) arc [x radius = 0.7cm, y radius = 0.7cm, start angle = 90,
  end angle = 270];

\draw plot [smooth] coordinates {(0.2,0.7) (0.05, 0.7) (-1,0.5) (-2.5,1.5) (-3.2, 1.5) (-3.9, 0.9) (-4.2,0.2) (-3.9,-0.9) (-3.2, -1.5) (-2.5,-1.5) (-1,-0.5) (0.05, -0.7) (0.2,-0.7)};

\node at (-4.5,1.4) {$(W,\ow)$};
\node at (-1.4,1.5) {$\widehat{Z}$};
\node at (1.4, 0.5) {$S^{2n-1}$};
\node [red] at (-0.2, -1.5) {$\widehat{L}$};

%%%Coordinates
\draw[->] (2.7,-2.5)--(2.7, 2.3);
\node at (3.4, 1.5) {$\C^{n-1}$};
\draw[->] (2.7, -2.5) arc [x radius = 2cm, y radius = 0.3cm, start angle = 0,
  end angle = -150];
\node at (-1.6, -2.6) {$\C P^{1}$};

%Lagrangian
\draw [red, thick] (1.5, 2.15)--(1.5, 1.85);
\draw [red, dashed, thick] (1.5, 1.8)--(1.5, -1.85);
\draw [red, thick] (0.2, -2.2)--(0.2, -0.7);
\draw [red, thick] (0.2, 1.8)--(0.2, 0.7);
\draw [thick, red] plot [smooth] coordinates {(0.2,0.7) (0.05, 0.5) (-1,0.2) (-2.5,1) (-3.2, 1) (-3.7, 0.5) (-3.8,0.2) (-3.8,-0.2) (-3.7, -0.5) (-3.2, -1) (-2.5, -1) (-1,-0.2) (0.05, -0.5) (0.2,-0.7)};

%%%standard curves
\draw [thick, blue] (0.2, 1.7) arc [x radius = 1.2cm, y radius = 0.2cm, start angle = -120,
  end angle = 0];
\draw [dashed, thick, blue] (2, 1.9) arc [x radius = 1.2cm, y radius = 0.2cm, start angle = 0,
  end angle = 55];
  
\begin{scope}[yshift=-0.2cm]
\draw [thick, blue] (0.2, 1.7) arc [x radius = 1.2cm, y radius = 0.2cm, start angle = -120,
  end angle = 0];
\draw [dashed, thick, blue] (2, 1.9) arc [x radius = 1.2cm, y radius = 0.2cm, start angle = 0,
  end angle = 55];
\end{scope}

\begin{scope}[yshift=-0.4cm]
\draw [thick, blue] (0.2, 1.7) arc [x radius = 1.2cm, y radius = 0.2cm, start angle = -120,
  end angle = 0];
\draw [dashed, thick, blue] (2, 1.9) arc [x radius = 1.2cm, y radius = 0.2cm, start angle = 0,
  end angle = 55];
\end{scope}  
  
\begin{scope}[yshift=-0.6cm]
\draw [thick, blue] (0.2, 1.7) arc [x radius = 1.2cm, y radius = 0.2cm, start angle = -120,
  end angle = 0];
\draw [dashed, thick, blue] (2, 1.9) arc [x radius = 1.2cm, y radius = 0.2cm, start angle = 0,
  end angle = 55];
\end{scope}

%%%% bottom disks
\begin{scope}[yshift=-3.7cm]
\draw [thick, blue] (0.2, 1.7) arc [x radius = 1.2cm, y radius = 0.2cm, start angle = -120,
  end angle = 0];
\draw [dashed, thick, blue] (2, 1.9) arc [x radius = 1.2cm, y radius = 0.2cm, start angle = 0,
  end angle = 55];
\end{scope}

\begin{scope}[yshift=-3.5cm]
\draw [thick, blue] (0.2, 1.7) arc [x radius = 1.2cm, y radius = 0.2cm, start angle = -120,
  end angle = 0];
\draw [dashed, thick, blue] (2, 1.9) arc [x radius = 1.2cm, y radius = 0.2cm, start angle = 0,
  end angle = 55];
\end{scope}

\begin{scope}[yshift=-3.3cm]
\draw [thick, blue] (0.2, 1.7) arc [x radius = 1.2cm, y radius = 0.2cm, start angle = -120,
  end angle = 0];
\draw [dashed, thick, blue] (2, 1.9) arc [x radius = 1.2cm, y radius = 0.2cm, start angle = 0,
  end angle = 55];
\end{scope}

\begin{scope}[yshift=-3.1cm]
\draw [thick, blue] (0.2, 1.7) arc [x radius = 1.2cm, y radius = 0.2cm, start angle = -120,
  end angle = 0];
\draw [dashed, thick, blue] (2, 1.9) arc [x radius = 1.2cm, y radius = 0.2cm, start angle = 0,
  end angle = 55];
\end{scope}

\end{scope}
\end{tikzpicture}
\caption{The pair $(\widehat{Z},\widehat{L})$ and standard holomorphic disks}
\label{fig: zhat}
\end{figure}
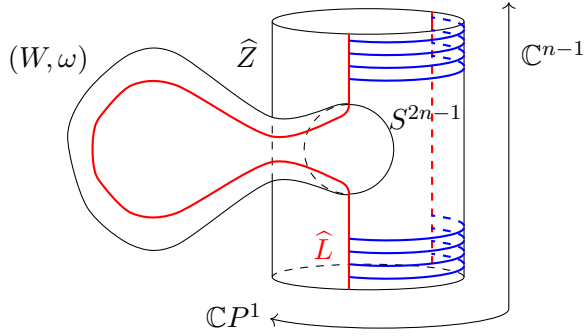
\noindent
Later, it will be useful to consider the following complex hypersurface
\begin{equation}\label{eq: cpxhyp}
H : = \C^{n-1} \times \{\infty\} \subset \widehat Z
\end{equation}
with the intersection number $[\pt \times \CP^1] \cdot [H] = 1$. Observe that $H$ is invariant under the involution $\widehat{\mathcal{R}}$.

\begin{remark}\label{rem: sameospherecase}
Note that $Z$ and $\widehat Z$ are basically the same ambient symplectic manifolds as those in \cite{BarGeiZeh19} and \cite{McD91} where the behavior of $J$-holomorphic sphere is studied. Our proof of Theorem~\ref{thm: main2} will highly rely on this.
\end{remark}

\subsection{The moduli space}  \label{sec: moduli space}
 
Denote the standard complex structure on $\C^{n-1}$ and $\CP^1$ by $J_0$ and $j$, respectively.
Let $\mathcal{J}_{\widehat{\mathcal{R}}}$ be the space of $\widehat \Ow$-compatible almost complex structures $J$ on $\widehat Z$ such that
\begin{itemize}
%\item $J$ is generic on $\Int W \subset \widehat Z$;
\item $J$ is of the form $J_0 \oplus j$ on $\widehat Z \setminus \Int W$;
\item $J$ is $\widehat{\mathcal{R}}$-anti-invariant, i.e. $\widehat{\mathcal{R}}^*J =- J$.
\end{itemize}
Take $J\in \J$. Let $D^2_+$ be the disk in $\CP^1 = \C \cup \{\infty\}$ given by the compactification of the upper half plane (including the real line $\R$) in $\C$.
Denote the other disk in $\CP^1$ by $D_-^2$ so that $\CP^1 = D^2_+ \cup D^2_-$ and $\R P^1=D_+^2\cap D_-^2$.
%See [Figure].
We abbreviate by $D^2\subset \C$ the closed unit disk.
For $v_0 \in \R^{n-1}$ with the norm $\|v_0\| > 1$ and a biholomorphism $\tau$ from $D^2$ to $D_+^2$, we have a $J$-holomorphic disk $u \colon (D^2, \p D^2) \rightarrow (\widehat Z, \widehat L)$ of the form $
u(z) = (v_0, \tau(z))$, and its homology class is given by $[u]  = [\pt \times D^2_+] \in \Ho_2(\widehat Z, \widehat L)$.
We call it a \emph{standard holomorphic disk}.
The region in $\widehat L$ consisting of $(v, z)\in \R^{n-1}\times D^2_+$ with $\|v\|>1$ is foliated by the boundary of those standard ones, see Figure~\ref{fig: zhat}.
Pick pairwise distinct boundary points $w_1, w_2, w_\infty \in \p D^2$ and pairwise distinct points $z_1,z_2,z_\infty=\infty\in \R P^1$ away from the glued region in \eqref{eq: construction_zhat}.
We define the moduli space $\mathcal{M}$ of $J$-holomorphic disks $u\colon (D^2, \p D^2) \rightarrow (\widehat Z, \widehat L)$ such that 
\begin{itemize}
\item its homology class $[u]$ represents $[\pt \times D^2_+] \in \Ho_2(\widehat Z, \widehat L)$;
\item $u(w_i) \in \R^{n-1} \times \{z_i\}$ for $i=1,2,\infty$.
\end{itemize}

\begin{proposition} \label{prop: eqregular}
Let $n\ge 4$. For generic $J\in \J$, the moduli space $\mathcal{M}$ is an oriented smooth manifold of dimension $n-1$.
\end{proposition}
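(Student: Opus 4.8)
The plan is to obtain $\mathcal{M}$ as the fixed-point set of an anti-holomorphic involution on a corresponding moduli space of $J$-holomorphic \emph{spheres}, and to deduce its smoothness and dimension from the already-understood sphere picture of \cite{BarGeiZeh19, McD91}, combined with equivariant transversality. First I would set up the doubling correspondence: given $J \in \J$, the anti-invariance $\widehat{\mathcal{R}}^* J = -J$ and the reflection $\rho_{\CP^1}$ on the domain $\CP^1 = D_+^2 \cup D_-^2$ together mean that a $J$-holomorphic disk $u\colon (D^2,\p D^2)\to (\widehat Z,\widehat L)$ doubles to a $J$-holomorphic sphere $\widetilde u\colon \CP^1 \to \widehat Z$ satisfying $\widetilde u \circ \rho_{\CP^1} = \widehat{\mathcal{R}} \circ \widetilde u$, in the homology class $[\widetilde u] = [\pt \times \CP^1]$ (since $[\pt \times D_+^2]$ doubles to $[\pt\times\CP^1]$); conversely such an equivariant sphere restricts to a disk in $\mathcal{M}$. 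The three boundary marked-point constraints $u(w_i)\in \R^{n-1}\times\{z_i\}$ become, under doubling, constraints pinning $\widetilde u$ at three points of $\CP^1$ fixed by $\rho_{\CP^1}$, placing $\widetilde u$ in the three-pointed sphere moduli space considered in \cite{BarGeiZeh19, McD91}. Thus $\mathcal{M}$ is identified with the fixed locus of the involution $\iota(\widetilde u) = \widehat{\mathcal{R}} \circ \widetilde u \circ \rho_{\CP^1}$ on that ambient sphere moduli space $\widetilde{\mathcal{M}}$.

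Next I would invoke the regularity theory for the spheres. In \cite{BarGeiZeh19, McD91} it is shown that the relevant moduli space of $J$-holomorphic spheres in class $[\pt\times\CP^1]$ is, for generic $J$, a smooth oriented manifold; here one must use a version of that statement for $J\in\J$, i.e. restricting to $\widehat{\mathcal{R}}$-anti-invariant $J$ that are standard outside $W$. This requires an \emph{equivariant} Sard--Smale argument: the universal moduli space of such equivariant spheres is cut out transversally provided every sphere in the relevant class is somewhere injective, which is automatic since the class $[\pt\times\CP^1]$ is primitive (it has intersection number $1$ with the hypersurface $H$ of \eqref{eq: cpxhyp}), so no multiply-covered configurations occur. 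The expected complex dimension of the unconstrained sphere moduli is $n - 1 + c_1([\pt\times\CP^1]) = n-1+2 = n+1$ complex, i.e. $2n+2$ real, and imposing the three point constraints valued in an $(n-1)$-dimensional real submanifold cuts this down; one should bookkeep this so that the \emph{disk} count $\dim\mathcal{M} = n-1$ falls out — concretely, the disk moduli with three boundary marked points mapping into $\R^{n-1}\times\{z_i\}$ has virtual dimension $(n-1) + \mu([\pt\times D_+^2]) - 0$ minus the codimension $3(n-1)$ of the constraints, after fixing the three domain points, and one checks this equals $n-1$ using $\mu([\pt\times D_+^2]) = n$ (the Maslov index of the standard disk, whose doubling has Chern number $2$). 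The orientability of $\mathcal{M}$ follows from the standard orientation theory for moduli of disks with boundary on a relatively spin (here, since $\widehat L$ contains $\R^{n-1}\times\RP^1\setminus\Int B^n$ and $L$ is eventually shown orientable, orientable) Lagrangian, or alternatively is inherited from the orientation of $\widetilde{\mathcal{M}}$ together with the fact that $\iota$ is orientation-compatible on the fixed locus.

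The main obstacle is achieving transversality \emph{equivariantly}, i.e. within the class $\J$ of $\widehat{\mathcal{R}}$-anti-invariant almost complex structures rather than among all compatible $J$. Generic anti-invariant perturbations need not make the linearized operator surjective at an element $\widetilde u$ fixed by $\iota$, because the obstruction space itself splits into $\pm1$-eigenspaces under $\iota$ and the anti-invariant perturbations only act on one of them. The resolution, following \cite{Kim21} (and as foreshadowed in the excerpt via Lemma \ref{lem: simple_nofixedpt}), is to show that the naturally induced involution on the relevant moduli space of \emph{simple} holomorphic spheres has no fixed points outside the locus we care about — more precisely, that the equivariant spheres arising from disks in $\mathcal{M}$ are regular as solutions of the full (non-equivariant) problem because the constraint geometry and the asphericity hypothesis $\omega|_{\pi_2(W)}=0$ force them into the explicitly understood foliated region, where regularity is manifest from the split structure $J_0\oplus j$. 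So the order of steps is: (i) establish the doubling bijection $\mathcal{M}\cong\Fix(\iota)\subset\widetilde{\mathcal{M}}$; (ii) use primitivity of $[\pt\times\CP^1]$ to rule out multiple covers and invoke the sphere regularity of \cite{BarGeiZeh19, McD91}; (iii) run the equivariant transversality argument à la \cite{Kim21}, reducing the potential failure to a fixed-point statement that Lemma \ref{lem: simple_nofixedpt} will dispatch; (iv) compute the index to get $\dim\mathcal{M}=n-1$ and read off orientability. The hypothesis $n\ge 4$ enters to guarantee the constrained moduli space has the expected nonnegative dimension and, together with asphericity, that no bubbling in positive codimension can occur.
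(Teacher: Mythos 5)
The central gap is in your step (iii), which is exactly where the real difficulty of the proposition sits. Your proposed resolution of the equivariant transversality problem --- that the symmetric objects of interest are regular because ``the constraint geometry and the asphericity hypothesis force them into the explicitly understood foliated region, where regularity is manifest from the split structure $J_0\oplus j$'' --- is not true and cannot be repaired in that form. The disks in $\mathcal{M}$ are precisely the ones whose boundaries are meant to sweep all of $\widehat L\supset L$; by the maximum principle every non-standard element of $\widehat{\mathcal M}$ must enter $\Int W$, where $J$ is a genuinely non-split, perturbed almost complex structure and regularity is anything but manifest (symplectic asphericity is irrelevant here; it enters only later, in the compactness argument of Proposition~\ref{prop: proper}). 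Moreover, in your fixed-locus formulation the spheres you care about, namely the doubles $u^\#$, are \emph{always} fixed by the involution $\iota(\widetilde u)=\widehat{\mathcal R}\circ\widetilde u\circ\rho_{\CP^1}$, so a statement that this involution has ``no fixed points outside the locus we care about'' cannot deliver transversality at those very spheres. The mechanism that actually works (in \cite{Kim21} and in the paper) is phrased on the disk moduli: the obstruction to making $\mathbf D_u$ surjective by $\widehat{\mathcal R}$-anti-invariant perturbations of $J$ supported in $\Int W$ is the possibility that $u$ agrees, up to reparametrization, with its reflection $\widehat{\mathcal R}\circ u\circ\rho_{D^2}$, i.e.\ that the induced involution $\Xi$ on $\widehat{\mathcal M}/\Aut(D^2)$ has a fixed point; Lemma~\ref{lem: simple_nofixedpt}(2) rules this out, and its proof rests on Lemma~\ref{lem: notspherical}: injectivity of $j_*\colon\Ho_2(\widehat Z)\to\Ho_2(\widehat Z,\widehat L)$ (which uses Eliashberg--Floer--McDuff and a Mayer--Vietoris computation) and the fact that $[\pt\times D^2_+]$ is not $J$-spherical (a parity argument with the hypersurface $H$). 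You supply none of these homological inputs; note that even your assertion $[u^\#]=[\pt\times\CP^1]$ (needed for your primitivity/somewhere-injectivity step) requires the injectivity of $j_*$, since doubling only gives $j_*[u^\#]=[u]-\widehat{\mathcal R}_*[u]=j_*[\pt\times\CP^1]$. The standard disks, and only they, are handled by the split structure, via the half-of-a-surjective-double argument of \cite{HLS,Kim21}, valid for every $J\in\J$.

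There are also quantitative errors and a missing step in the cut-down to $\mathcal M$. The Maslov index of the standard disk is $\mu_{\widehat L}=2$ (its double has Chern number $2$), not $n$, so $\dim\widehat{\mathcal M}=n\,\chi(D^2)+2=n+2$; the three constraints $u(w_i)\in\R^{n-1}\times\{z_i\}$ have total codimension $3$ inside $\widehat L\times\widehat L\times\widehat L$, not $3(n-1)$, which is how one lands on $n-1$ --- as written, your bookkeeping does not produce that number. Furthermore, passing from $\widehat{\mathcal M}$ to $\mathcal M=\ev_{\mathbf w}^{-1}(K)$ requires a second genericity statement: transversality of the evaluation map to $K$ achieved \emph{within} $\J$, which forces one to exhibit anti-invariant variations of $J$; the paper does this with variations of the form $\mathcal L_{X_F}J$ for suitable Hamiltonians $F$, checked to satisfy $\widehat{\mathcal R}^*(\mathcal L_{X_F}J)=-\mathcal L_{X_F}J$. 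Finally, for orientations one needs $\widehat L$ to be orientable \emph{and spin} (obtained from the Smith inequality and Mayer--Vietoris) before invoking the disk orientation machinery; orientability alone, which is all you mention, does not suffice.
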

The rest of this section is devoted to give a proof of Proposition~\ref{prop: eqregular}. A technical point here is that $J$ is $\widehat{\mathcal{R}}$-anti-invariant, and this requires to achieve an equivariant transversality.

We first introduce the notion of the double of holomorphic disks with boundary on a real Lagrangian.
Recall that $\rho_{\C P^1}$ is the anti-holomorphic involution of $\C P^1$ whose fixed point set is~$\R P^1$ and that $\tau$ is a biholomorphism from $D^2$ to $D_+^2$.
For a $J$-holomorphic disk $u\colon (D^2, \p D^2) \rightarrow (\widehat Z, \widehat L)$, %we define another $J$-holomorphic disk $\overline u : (D^2_-, \p D_-^2) \rightarrow (\widehat Z, \widehat L)$ by
%$$
%\overline u(z) = \mathcal{R} \circ u \circ \rho_0^{\CP^1} (z).
%$$
%The homology class of $\overline u$ in $H_2(\widehat Z, \widehat L)$ is given by
%$$
%[\overline u] = -  \widehat{\mathcal{R}}_*[u].
%$$
its \emph{double} $u^\# \colon \CP^1 \rightarrow \widehat Z$ is a smooth $J$-holomorphic sphere defined~by
$$
u^\# (z) = \begin{cases} u(\tau^{-1}(z)) & \text{for $z \in D^2_+$}, \\ \widehat{\mathcal{R}}\big(u(\tau^{-1}(\rho_{\C P^1}(z)))\big) & \text{for $z \in D^2_-$}, \end{cases}
$$
see \cite[Section~5.2]{Kim21}.
The homology class $[u^\#] \in \Ho_2(\widehat Z)$ satisfies
$$
j_*(u^\#) = [u] - \widehat{\mathcal{R}}_*[u],
$$
where $j_*\colon \Ho_2(\widehat Z) \rightarrow \Ho_2(\widehat Z, \widehat L)$ is the natural map in the long exact sequence for the pair~$(\widehat{Z},\widehat{L})$.

The following lemma will be crucial later.
\begin{lemma}\label{lem: notspherical}Let $n\ge 4$ and $J\in \J$.
\begin{enumerate}
	\item The natural map $j_*\colon \Ho_2(\widehat Z) \rightarrow \Ho_2(\widehat Z, \widehat L)$ is injective.
	\item The homology class $[\pt \times D^2_+] \in \Ho_2(\widehat Z, \widehat L)$ is not $J$-spherical, i.e. it is not the image of the homology class of a $J$-holomorphic sphere under the map $j_*\colon \Ho_2(\widehat Z) \rightarrow \Ho_2(\widehat Z, \widehat L)$.  
\end{enumerate}
\end{lemma}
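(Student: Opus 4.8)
The plan is to reduce both assertions to elementary algebraic topology, exploiting that $\widehat L = \Fix(\widehat{\mathcal{R}})$ is the \emph{fixed} locus of the involution, together with explicit descriptions of $H_2(\widehat Z;\Z)$ and $H_1(\widehat L;\Z)$.

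For part~(1), I would first record that $\widehat Z$ is diffeomorphic to $\C^{n-1}\times\CP^1$: since $(W,\ow)$ is a symplectically aspherical filling of $(S^{2n-1},\xi_{\st})$ it is diffeomorphic to $B^{2n}$ by Eliashberg--Floer--McDuff, and $\widehat Z$ is obtained from $\C^{n-1}\times\CP^1$ by excising $\Int B^{2n}$ and regluing $W$ along $S^{2n-1}$ via the standard collar identification of~\eqref{eq: construction_zhat}. Hence $H_2(\widehat Z;\Z)\cong\Z$, generated by $g := [\{v_0\}\times\CP^1]$ for any $v_0\in\C^{n-1}$ with $\|v_0\|>1$, and indeed $g\cdot[H]=1$. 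The crucial point is that $\widehat{\mathcal{R}}$ acts on $H_2(\widehat Z)\cong\Z$ by $-\id$: taking $v_0\in\R^{n-1}$ with $\|v_0\|>1$, the sphere $\{v_0\}\times\CP^1$ lies in the region where $\widehat{\mathcal{R}}=\mathcal{R}_0\times\rho_{\CP^1}$, so $\widehat{\mathcal{R}}$ carries it to itself by $\id\times\rho_{\CP^1}$, which is orientation-reversing because $\rho_{\CP^1}$ is complex conjugation on $\CP^1\cong S^2$; thus $\widehat{\mathcal{R}}_*g=-g$, and torsion-freeness of $H_2(\widehat Z)$ forces $\widehat{\mathcal{R}}_*=-\id$. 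Now for any $c\in H_2(\widehat L)$ the inclusion $\iota\colon\widehat L\hookrightarrow\widehat Z$ satisfies $\widehat{\mathcal{R}}\circ\iota=\iota$, since every point of $\widehat L$ is fixed, whence $\iota_*c=\widehat{\mathcal{R}}_*\iota_*c=-\iota_*c$ and so $\iota_*c=0$. Thus the image of $\iota_*\colon H_2(\widehat L)\to H_2(\widehat Z)$ is trivial, and exactness of the homology sequence of $(\widehat Z,\widehat L)$ gives that $j_*$ is injective.

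For part~(2), I would prove the stronger fact that $[\pt\times D^2_+]$ is not even in $\im j_*$; this suffices, since a $J$-holomorphic sphere represents a class in $H_2(\widehat Z)$. By exactness, $[\pt\times D^2_+]\in\im j_*$ would force $\partial[\pt\times D^2_+]=0$ in $H_1(\widehat L)$, where $\partial$ is the connecting homomorphism of $(\widehat Z,\widehat L)$. But $\partial[\pt\times D^2_+]=[\pt\times\RP^1]$, and Mayer--Vietoris applied to $\widehat L=L\cup_{\mathcal{L}_0}(\R^{n-1}\times\RP^1\setminus\Int B^n)$ --- using $n\geq 4$, so $\tilde H_0(S^{n-1})=\tilde H_1(S^{n-1})=0$ --- yields $H_1(\widehat L;\Z)\cong H_1(L;\Z)\oplus H_1(\R^{n-1}\times\RP^1\setminus\Int B^n;\Z)$. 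Since $\RP^1\cong S^1$ and removing an open ball from an $n$-manifold with $n\geq 3$ does not change $H_1$, the second summand is $\Z$ and $[\pt\times\RP^1]$ generates it. Hence $\partial[\pt\times D^2_+]\neq 0$, so $[\pt\times D^2_+]\notin\im j_*$, which proves part~(2).

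The argument is soft, so there is no serious obstacle; the points deserving care are the orientation bookkeeping in part~(1) --- that complex conjugation on the $\CP^1$-factor is orientation-reversing, hence $\widehat{\mathcal{R}}$ acts by $-1$ on $H_2(\widehat Z)\cong\Z$ --- and verifying that the topological inputs (in particular $W\cong B^{2n}$, whence $H_2(\widehat Z)\cong\Z$, and the vanishing $\tilde H_0(S^{n-1})=\tilde H_1(S^{n-1})=0$) are valid in the stated range $n\geq 4$.
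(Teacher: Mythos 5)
Your proof is correct, but it takes a genuinely different route from the paper's. For part (1) the paper runs Mayer--Vietoris on both $\widehat L$ and $\widehat Z$, and uses the commutative diagram together with Eliashberg--Floer--McDuff ($\Ho_2(W)=0$) to show that $i_*\colon \Ho_2(\widehat L)\to\Ho_2(\widehat Z)$ vanishes; you instead exploit the $\Z_2$-symmetry: $\widehat{\mathcal{R}}$ fixes $\widehat L$ pointwise and acts by $-\id$ on $\Ho_2(\widehat Z)\cong\Z$, so the image of $i_*$ is $2$-torsion and hence zero. Both arguments need Eliashberg--Floer--McDuff (you need it to know $\Ho_2(\widehat Z)\cong\Z$ is torsion-free), but yours is shorter and makes essential use of $\widehat L$ being a fixed-point set, which the paper's part (1) does not. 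For part (2) the difference is more substantial: the paper argues geometrically, pairing a hypothetical sphere $\sigma$ with its reflection $\overline\sigma$, using the injectivity from (1) to identify $[\sigma]+[\overline\sigma]$ with $[\pt\times\CP^1]$, and deriving a parity contradiction from positivity of intersections with the $\widehat{\mathcal{R}}$-invariant hypersurface $H$; you prove the stronger, purely topological statement that $[\pt\times D^2_+]$ does not lie in the image of $j_*$ at all, because its boundary $[\pt\times\RP^1]$ is nonzero in $\Ho_1(\widehat L)\cong\Ho_1(L)\oplus\Z$ (Mayer--Vietoris, using $n\ge 4$). This removes all holomorphic-curve input (no $J$, no positivity of intersections, no hypersurface $H$) and still suffices for every later use of the lemma. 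One small caveat: your assertion that $\widehat Z$ is diffeomorphic to $\C^{n-1}\times\CP^1$ is more than you need and is not fully justified as stated (regluing a ball along $S^{2n-1}$ via the Eliashberg--Floer--McDuff identification requires an extra isotopy/extension argument); but you only use that $\Ho_2(\widehat Z)\cong\Z$ is generated by $[\pt\times\CP^1]$, which follows from Mayer--Vietoris exactly as in the paper, so this is cosmetic rather than a gap.
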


\begin{proof}
We first prove that the map $j_*\colon \Ho_2(\widehat Z) \rightarrow \Ho_2(\widehat Z, \widehat L)$ is injective. In view of the long exact sequence for the pair $(\widehat Z, \widehat L)$
$$
\rightarrow \Ho_2(\widehat L) \xrightarrow{i_*} \Ho_2(\widehat Z) \xrightarrow{j_*} \Ho_2(\widehat Z, \widehat L) \rightarrow
$$
it suffices to show that the induced map $i_* \colon \Ho_2(\widehat L) \rightarrow \Ho_2(\widehat Z)$ of the inclusion  is trivial.
Considering the Mayer--Vietoris sequences for the decompositions $$\widehat L = L \cup_{\mathcal{L}_0}  (\R^{n-1} \times \RP^1 \setminus \Int B^n), \quad \widehat Z = W \cup_{S^{2n-1}}(\C^{n-1} \times \CP^1 \setminus \Int B^{2n})$$ we obtain the following commutative diagram:
$$
\begin{tikzcd}
\Ho_2(\mathcal{L}_0) \arrow{r}  \arrow{d} &  \Ho_2(L) \oplus \Ho_2(\R^{n-1} \times \RP^1 \setminus \Int B^n) \arrow{r}{\Phi_*} \arrow[swap]{d}{i_* \oplus i_*} & \Ho_2(\widehat L)  \arrow{d}{i_*} \arrow{r} & \Ho_1(\mathcal{L}_0)  \arrow{d} \\%
\Ho_2(S^{2n-1}) \arrow{r} & \Ho_2(W) \oplus \Ho_2(\C^{n-1} \times \CP^1 \setminus \Int B^{2n}) \arrow{r}{\Psi_*}& \Ho_2(\widehat Z) \arrow{r} & \Ho_1(S^{2n-1})
\end{tikzcd}
$$
Here the horizontal arrows are of the Mayer--Vietoris sequences and the vertical arrows come from the exact sequences for the  respective pairs.
%$(S^{2n-1}, \mathcal{L}_0)$, $(W, L)$, $(\R^{n-1} \times \RP^1 \setminus \Int B^n, \C^{n-1} \times \CP^1 \setminus \Int B^{2n})$, and $(\widehat Z, \widehat L)$. 
Since $n \geq 4$, the groups $\Ho_k(\mathcal{L}_0 = S^{n-1})$, $\Ho_k(S^{2n-1})$ for $k = 1, 2$ are trivial, and hence the maps $\Phi_*$, $\Psi_*$ are isomorphisms. By the theorem of Eliashberg--Floer--McDuff \cite{McD91}, the filling $W$ is diffeomorphic to the ball $B^{2n}$, so $\Ho_2(W)$ is trivial. Moreover the second homology group $\Ho_2(\R^{n-1} \times \RP^1 \setminus \Int B^n)$ is isomorphic to $\Ho_2(\R^{n-1} \times \RP^1) \cong \Ho_2(\RP^1)$ which is trivial. (One way to see this is to consider the long exact sequence for the pair $(\R^{n-1} \times \RP^1, \R^{n-1} \times \RP^1 \setminus \Int B^n)$.) Therefore the second horizontal inclusion 
$$
i_* \oplus i_* \colon \Ho_2(L) \oplus \Ho_2(\R^{n-1} \times \RP^1 \setminus \Int B^n) \rightarrow \Ho_2(W) \oplus \Ho_2(\C^{n-1} \times \CP^1 \setminus \Int B^{2n})
$$
is vanishing. By the commutativity, we conclude that the inclusion
$$
i_* \colon \Ho_2(\widehat L) \longrightarrow \Ho_2(\widehat Z)
$$
is also trivial, as asserted.

To show the second assertion, we suppose on the contrary that $j_*[\sigma] = [\pt \times D^2_+]$ for some $J$-holomorphic sphere $\sigma\colon \CP^1 \rightarrow \widehat Z$. Consider the reflected $J$-holomorphic sphere $\overline \sigma\colon \CP^1 \rightarrow \widehat Z$ defined by
\begin{equation}\label{eq: doub_sphere}
\overline \sigma = \widehat{\mathcal{R}} \circ \sigma \circ \rho_{\CP^1}.
\end{equation}
Its homology class is given by $[\overline \sigma] = -\widehat{\mathcal{R}}_*[\sigma]$, and hence we have
$$
j_*[\overline \sigma]  = j_*(-\widehat{\mathcal{R}}_*[\sigma]) = -\widehat{\mathcal{R}}_* (j_*[\sigma]) = -\widehat{\mathcal{R}}_* ([\pt \times D_+^2]) = [\pt \times D_-^2].
$$
It follows that
$$
j_*([\sigma] + [\overline \sigma]) = [\pt \times D^2_+] + [\pt \times D^2_-] = j_*[\pt \times \CP^1].
$$
Since $j_*\colon \Ho_2(\widehat Z) \rightarrow \Ho_2(\widehat Z, \widehat L)$ is injective as in the above claim, we obtain
$$
[\pt \times \CP^1] = [\sigma] + [\overline \sigma].
$$
Consider the complex hypersurface $H$ in \eqref{eq: cpxhyp}. Note that the involution $\widehat{\mathcal{R}}$ on $\widehat Z$ restricts to~$H$. This implies that if the sphere $\sigma$ intersects $H$, then so does $\overline \sigma$.
Positivity of intersections implies that the intersection number $([\sigma] + [\overline \sigma]) \cdot [H]$ must be even, and this contradicts to $[\pt \times \CP^1] \cdot [H] = 1$.
\end{proof}

Let $\widehat{\mathcal{M}}_J$ be the space of $J$-holomorphic disks of $(\widehat{Z},\widehat{L})$ representing the homology class $[\pt\times D_+^2]\in \Ho_2(\widehat{Z},\widehat{L})$ and $\Aut(D^2)$ the group of biholomorphisms on $D^2$.
Unless we emphasize the choice of $J$, we suppress $J$ in the notation.
The moduli space $\widehat{\mathcal{M}}/\text{Aut}(D^2)$ admits the involution induced by $\widehat{\mathcal{R}}$
$$
\Xi [u]= [\widehat{\mathcal{R}}\circ u\circ \rho_{D^2}],
$$
where $\rho_{D^2}$ is complex conjugation of $D^2\subset \C$.
Achieving equivariant transversality for $\widehat{\mathcal{M}}$ is closely related to the phenomenon that $\Xi$ has no fixed point.

\begin{lemma}\label{lem: simple_nofixedpt} Let $n\ge 4$ and $J\in \J$.
\begin{enumerate}
\item Every $u\in \widehat{\mathcal{M}}$ is simple, i.e. the set of injective points of $u$,
$$
\{z\in D^2\mid du(z)\ne 0,\ u^{-1}(u(z))=\{z\}\}
$$
is dense.
\item The involution $\Xi$ of $\widehat{\mathcal{M}}/\Aut(D^2)$ has no fixed point.
\end{enumerate}
\end{lemma}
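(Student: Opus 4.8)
The plan is to reduce both statements to properties of the double $u^\#$, which by construction is a $J$-holomorphic \emph{sphere} in $\widehat Z$, and then to invoke the structure theory of holomorphic spheres. The first step is to identify the homology class of $u^\#$. Combining the displayed identity $j_*(u^\#) = [u] - \widehat{\mathcal{R}}_*[u]$ with $[u] = [\pt \times D^2_+]$ and the computation $\widehat{\mathcal{R}}_*[\pt \times D^2_+] = -[\pt \times D^2_-]$ already carried out in the proof of Lemma~\ref{lem: notspherical}, one gets $j_*[u^\#] = [\pt \times D^2_+] + [\pt \times D^2_-] = j_*[\pt \times \CP^1]$, and since $j_*$ is injective by Lemma~\ref{lem: notspherical}(1), we conclude $[u^\#] = [\pt \times \CP^1]$. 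In particular $[u^\#] \cdot [H] = 1$ for the complex hypersurface $H = \C^{n-1} \times \{\infty\}$ of \eqref{eq: cpxhyp}; note $H$ sits in the region where $J = J_0 \oplus j$ is integrable, so positivity of intersections between $u^\#$ and $H$ is the classical complex-analytic statement for complex curves and complex hypersurfaces.

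For part (1), I would first show that $u^\#$ is simple. If it were not, McDuff's structure theorem would give $u^\# = \sigma \circ \phi$ for a simple $J$-holomorphic sphere $\sigma$ and a holomorphic branched cover $\phi$ of degree $k \ge 2$, so that $1 = [u^\#]\cdot[H] = k\,([\sigma]\cdot[H])$. Since there is no non-constant $J$-holomorphic sphere in $H \cong \C^{n-1}$, the image of $\sigma$ is not contained in $H$, so positivity of intersections gives $[\sigma]\cdot[H] \ge 0$; no such integer identity is possible, a contradiction. Hence $u^\#$ is simple. To transfer simplicity to $u$: if $w$ is an injective point of $u^\#$ lying in the interior of $D^2_+$, write $w = \tau(z)$ with $z$ interior in $D^2$ and $u^\#(w) = u(z)$. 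If some $z' \in \partial D^2$ had $u(z') = u(z)$, then $u(z) \in \widehat L = \Fix(\widehat{\mathcal{R}})$, so by the defining symmetry of the double $u^\#(\rho_{\CP^1}(w)) = \widehat{\mathcal{R}}(u^\#(w)) = u^\#(w)$, forcing $w \in \RP^1$ and contradicting that $w$ is interior. Thus the $\tau$-preimages of interior injective points of $u^\#$ are injective points of $u$, and they are dense in $D^2$; so $u$ is simple.

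For part (2), suppose $\Xi[u] = [u]$ for some $u \in \widehat{\mathcal{M}}$, i.e. $\widehat{\mathcal{R}} \circ u \circ \rho_{D^2} = u \circ \phi$ for some $\phi \in \Aut(D^2)$. Rewriting this as $\widehat{\mathcal{R}} \circ u = u \circ g$ with $g := \phi \circ \rho_{D^2}$ an anti-holomorphic automorphism of $D^2$, and composing once more with $\widehat{\mathcal{R}}$, we get $u = u \circ g^2$; since $u$ is simple by part (1), this forces $g^2 = \id$. So $g$ is an anti-holomorphic involution of $D^2$, hence conjugate in $\Aut(D^2)$ to $\rho_{D^2}$, whose fixed locus meets the interior of $D^2$; consequently $\tau(\Fix g)$ meets the interior of $D^2_+$. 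Let $\sigma_g$ be the anti-holomorphic involution of $\CP^1$ whose restriction to $D^2_+$ is $\tau \circ g \circ \tau^{-1}$; then $\Fix(\sigma_g)$ is a circle meeting the interior of $D^2_+$, hence $\Fix(\sigma_g) \neq \RP^1 = \Fix(\rho_{\CP^1})$, so $\sigma_g \neq \rho_{\CP^1}$. On the other hand, substituting $\widehat{\mathcal{R}} \circ u = u \circ g$ into the definition of $u^\#$ and using the defining symmetry $u^\# \circ \rho_{\CP^1} = \widehat{\mathcal{R}} \circ u^\#$, one checks that $u^\# \circ \sigma_g = \widehat{\mathcal{R}} \circ u^\# = u^\# \circ \rho_{\CP^1}$ on all of $\CP^1$; hence $u^\# \circ (\rho_{\CP^1} \circ \sigma_g) = u^\#$ with $\rho_{\CP^1} \circ \sigma_g$ holomorphic, and simplicity of $u^\#$ forces $\rho_{\CP^1} \circ \sigma_g = \id$, i.e. $\sigma_g = \rho_{\CP^1}$ — contradicting the previous sentence. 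Therefore $\Xi$ has no fixed point.

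The main obstacle is the careful bookkeeping surrounding the double: verifying that $u^\#$ is genuinely $J$-holomorphic, that its homology class satisfies the stated formula with the correct orientations, and that simplicity and the various (anti-)holomorphic reparametrizations interact compatibly under $u \mapsto u^\#$. These are the real technical content, and I would lean on the analysis of doubles in \cite[Section~5.2]{Kim21}. A secondary point is ensuring positivity of intersections is available for $H$, which is why it is convenient that $H$ lies in the integrable region where $J = J_0 \oplus j$, so that the classical statement applies verbatim.
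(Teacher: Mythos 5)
Your argument is correct; part (1) is essentially the paper's own proof (identify $[u^\#]=[\pt\times \CP^1]$ via injectivity of $j_*$ from Lemma~\ref{lem: notspherical}, rule out multiple covers because $[\pt\times\CP^1]\cdot[H]=1$, then transfer simplicity from $u^\#$ to $u$), but part (2) takes a genuinely different route. The paper, after producing (via \cite{Kim21}) an anti-holomorphic involution of $D^2$ commuting with $u$ in the sense $\widehat{\mathcal{R}}\circ u\circ\rho=u$ and whose fixed arc cuts $D^2$ into two half-disks, restricts $u$ to the half-disks to obtain $J$-holomorphic disks $u_1,u_2$ with $[u]=[u_1]+[u_2]$ and $[u_2]=-\widehat{\mathcal{R}}_*[u_1]$, so that $[u]=j_*[u_1^\#]$ would be $J$-spherical, contradicting Lemma~\ref{lem: notspherical}(2). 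You instead stay with the double: you extend $\tau\circ g\circ\tau^{-1}$ to an anti-holomorphic involution $\sigma_g$ of $\CP^1$, verify $u^\#\circ\sigma_g=\widehat{\mathcal{R}}\circ u^\#=u^\#\circ\rho_{\CP^1}$, and use simplicity of $u^\#$ to force $\sigma_g=\rho_{\CP^1}$, contradicting that $\Fix(\sigma_g)$ meets $\Int D^2_+$. What your route buys: assertion (2) then needs only assertion (1) plus elementary facts about anti-holomorphic involutions, so Lemma~\ref{lem: notspherical}(2) (the parity-of-intersection-with-$H$ argument) is not used at all, and you avoid constructing the half-disk maps $u_1,u_2$ and their doubles; what the paper's route buys is that it reuses the homological setup of Lemma~\ref{lem: notspherical} and outsources the disk-involution analysis to \cite{Kim21}, with less reparametrization bookkeeping. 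Two steps you should still spell out (both do check out): the existence and uniqueness of the extension $\sigma_g$ (the anti-M\"obius extension, equivalently Schwarz reflection across $\RP^1$) together with the explicit verification of $u^\#\circ\sigma_g=\widehat{\mathcal{R}}\circ u^\#$ on $D^2_-$, and the classification fact, also used implicitly by the paper via \cite{Kim21}, that every anti-holomorphic involution of $D^2$ is conjugate in $\Aut(D^2)$ to $\rho_{D^2}$ and hence has fixed locus an arc meeting $\Int D^2$; note also that your derivation of $g^2=\id$ from simplicity is exactly the content the paper cites from \cite{Kim21}, so it is worth keeping it explicit as you did.
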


\begin{proof}
For the first assertion, let $u^\#\colon \CP^1 \rightarrow \widehat{Z}$ be the double of  $u\in\widehat{\mathcal{M}}$.
Then the simplicity of $u$ is equivalent to that of $u^\#$. Note that the homology class of $u^\#$ satisfies
$$
j_*[u^\#] = [\pt \times D^2_+] - \widehat{\mathcal{R}}_*[\pt \times D_+^2] = [\pt \times D^2_+] + [\pt \times D_-^2] = j_* [\pt \times \CP^1].
$$
Since $j_*$ is injective by Lemma~\ref{lem: notspherical}, we have 
$$
[u^\#] = [\pt \times \CP^1].
$$
Note that $[\pt \times \CP^1]$ is a primitive class in $\Ho_2(\widehat Z)$ since $[\pt \times \CP^1] \cdot [H] = 1$. Therefore $u^\#$ is a simple $J$-holomorphic curve, and so is $u$.

%{\color{red}By positivity of intersections, $[\pt \times \CP^1]$ must be a primitive class in $\Ho_2(\widehat Z)$ since the intersection number $[\pt \times \CP^1] \cdot [H]$ is equal to $1$.} Therefore $u^\#$ is a simple $J$-holomorphic curve, and so is $u$.

We now prove the second assertion by following \cite[Lemmata~3.4 and 3.5]{Kim21} and the proofs therein.
Arguing by contradiction we assume that there exists $[u]\in \widehat{\mathcal{M}}/\text{Aut}(D^2)$ such that $\Xi[u]=[u]$.
Then there is an anti-holomorphic involution $\rho$ of $D^2$ such that
\begin{itemize}
	\item the fixed point set of $\rho$ is a simple smooth arc with ends in $\p D^2$ and it divides the disk $D^2$ into two closed disks $\D_+$ and $\D_-$.
	\item $\widehat{\mathcal{R}}\circ u\circ \rho = u$.
\end{itemize}
%there exist two $J$-holomorphic disks $u_1$ and $u_2$ satisfying $[u]=[u_1]+[u_2]\in \Ho_2(\widehat{Z},\widehat{L})$.
Choose any biholomorphism $\phi\colon \Int D^2\to \Int\D_+$ that extends to a $C^0$-map from $D^2$ to $\D_+$.
The continuous maps from $(D^2,\p D^2)$ to $(\widehat{Z},\widehat{L})$,
$$
u_1=u\circ \phi,\qquad u_2=u\circ \rho\circ \phi\circ \rho
$$
extend to smooth $J$-holomorphic disks, still denoted by $u_1$ and $u_2$.
By construction, we obtain $[u]=[u_1]+[u_2]\in \Ho_2(\widehat{Z},\widehat{L})$.
Since
$$
\widehat{\mathcal{R}}\circ u_1 \circ \rho = \widehat{\mathcal{R}}\circ u \circ \phi \circ \rho = u \circ \rho \circ \phi \circ \rho = u_2
$$
and $\rho$ is orientation-reversing, we have $[u_2]=-\widehat{\mathcal{R}}_*[u_1]$.
Therefore, we deduce that
$$
j_*[u_1^\#] = [u_1] - \widehat{\mathcal{R}}_*[u_1] = [u_1]+[u_2] = [u].
$$
This contradicts to the fact that $[u]=[\pt \times D_+^2]$ is not $J$-spherical as proved in Lemma~\ref{lem: notspherical}.
\end{proof}

We are in position to prove Proposition~\ref{prop: eqregular}.

\begin{proof}[Proof of Proposition~\ref{prop: eqregular}]
For a standard holomorphic disk $u$, its double $u^\#$ is a simple $J$-holomorphic sphere in $\R^{n-1}\times \C P^1\setminus \Int B^{2n}\subset \widehat{Z}$ whose image is $\{v_0\}\times \C P^1$ for some $v_0\in \R^{n-1}$.
It is known, see for example \cite[Corollary~3.3.5]{MSJcurve}, that
the linearized Cauchy--Riemann operator $\mathbf{D}_{u^\#}$ of $u^\#$ between suitable Banach spaces is surjective as $J=J_0\oplus j$ on $\C^{n-1}\times \C P^1\setminus \Int B^{2n}$. 
By \cite[Proposition~on~pg.\ 158]{HLS} or \cite[Section~5.3]{Kim21}, the operator $\mathbf{D}_u$ corresponding to the standard disk $u$ is surjective as well, and this holds for all $J\in \J$.
Combining the doubling trick with the maximum principle \cite[(i) in Lemma~2.1]{BarGeiZeh19}, we deduce that every $u\in \widehat{\mathcal{M}}$ is of a standard disk or must intersect the interior of $W$.
Since $\Xi$ has no fixed point by Lemma~\ref{lem: simple_nofixedpt}, it follows from equivariant transversality theorem~\cite[Section~3.3]{Kim21} that there exists a Baire subset $\J^{\reg}\subset \J$ such that for every $J\in\J^{\reg}$ the operator $\mathbf{D}_u$ of any disk $u\in \widehat{\mathcal{M}}$ intersecting $\Int W$ is surjective.
Therefore, for $J\in \J^{\reg}$ the space $\widehat{\mathcal{M}}$ is a smooth manifold of dimension
$$
\dim \widehat{\mathcal{M}} = n\cdot\chi(D^2)+\mu_{\widehat{L}}(u) = n+2,
$$
where $\mu_{\widehat{L}}(u)$ is the Maslov index of $u\in \widehat{\mathcal{M}}$.
To complete the proof, we shall show that there exists a Baire subset $\widehat{\J^{\reg}}$ of $\J$ such that for every $J\in \widehat{\J^{\reg}}$ we have $J\in \J^{\reg}$ and the evaluation map
$$
\ev_{\mathbf{w}} \colon \widehat{\mathcal{M}} \to \widehat{L}\times \widehat{L}\times \widehat{L},\qquad \ev_{\mathbf{w}}(u)=(u(w_1),u(w_2),u(w_\infty))
$$
is transverse to the submanifold
$$
K:=(\R^{n-1}\times \{z_1\})\times (\R^{n-1}\times \{z_2\})\times (\R^{n-1}\times \{z_\infty\}),
$$
since then $\mathcal{M}=\ev_{\mathbf{w}}^{-1}(K)$ is a smooth manifold of dimension
$$
\dim \mathcal{M}=\dim \widehat{\mathcal{M}}-\codim K = n-1.
$$
{\bf Claim.} \emph{Every point in $\widehat{L}\times \widehat{L}\times \widehat{L}$ of pairwise distinct points in $\widehat{L}$ is a regular value of the universal evaluation map
$$
\ev_{\univ}\colon \mathcal{N} \to \widehat{L}\times \widehat{L}\times \widehat{L},\qquad \ev_{\univ}(u,J)=\ev_{\mathbf{w}}(u),
$$
where $\mathcal{N}=\{(u,J)\mid J\in \J^\ell,\ u\in \widehat{\mathcal{M}}_J\}$ is the universal moduli space and $\J^\ell$ consists of $J\in \J$ of $C^\ell$-classes.}\vspace{0.2cm}

Let $(u,J)\in \mathcal{N}$ such that $\ev_{\mathbf{w}}(u,J)=(u(w_1),u(w_2),u(w_\infty))$ is a tuple of pairwise distinct points.
For any tuple of vectors $v_i\in T_{u(w_i)}\widehat{L}$ choose a smooth function $F\colon \widehat{Z}\to \R$ such that $X_F(u(w_i))+v_i=0$, where $X_F$ is the Hamiltonian vector field of $F$.
Using \cite[Exercise~3.1.4]{MSJcurve}, we know that $\mathbf{D}_u(X_F(u))=\frac{1}{2}(\mathcal{L}_{X_F}J)(u)du\circ j$. 
Since $\widehat{\mathcal{R}}^*X_F=X_F$ and $J$ is $\widehat{\mathcal{R}}$-anti-invariant, we have $\widehat{\mathcal{R}}^*(\mathcal{L}_{X_F}J)=\mathcal{L}_{\widehat{\mathcal{R}}^*X_F}\widehat{\mathcal{R}}^*J=-\mathcal{L}_{X_F}J$, yielding that $(-X_F(u),\mathcal{L}_{X_F}J)\in T_{(u,J)}\mathcal{N}$ and
$$
d\ev_{\univ}(u,J)\big(-X_F(u),\mathcal{L}_{X_F}J\big)=(v_1,v_2,v_\infty).
$$
See \cite[Section~3.4]{MSJcurve} for details.
This shows the claim as asserted.

Since every point in $K$ is of pairwise distinct points, $\ev_{\univ}$ is transverse to $K$, and hence the universal moduli space $\ev_{\univ}^{-1}(K)$
is a Banach manifold.
The rest of the proof is a standard argument, namely that we use the Sard--Smale theorem and Taubes' argument to the projection map $\pi\colon \ev_{\univ}^{-1}(K) \to \J^\ell$.
We refer to \cite[Theorem~3.1.6 and Proposition~3.4.2]{MSJcurve} for more details.

In order to orient the moduli space $\mathcal{M}$, it suffices to show that $\widehat{L}$ and $\widehat{\mathcal{M}}$ are orientable.
By Remark~\ref{rem: smith}, we know $\Ho^k(L;\Z_2)=0$ for all $k\ge 1$.
In particular, $L$ is orientable, and so is $\widehat{L}$.
By the Mayer--Vietoris sequence for the decomposition $\widehat{L}=L\cup_{\mathcal{L}_0} (\R^{n-1}\times \R P^1 \setminus \Int B^n)$, one can show that $\Ho^2(\widehat L;\Z_2)=0$, and hence $\widehat{L}$ is spin.
A standard construction in \cite[Section~8.1]{FOOObook2} yields an orientation on $\widehat{\mathcal{M}}$.
\end{proof}

\begin{remark}\label{rem: endofmoduli}
As in \cite[Lemma~2.1]{BarGeiZeh19}, we can apply the maximum principle in the $\C^{n-1}$-direction of $\C^{n-1}\times \CP^1 \setminus \Int B^{2n} \subset \widehat Z$ to $J$-holomorphic spheres which are the doubles of $J$-holomorphic disks $u \in \mathcal{M}$.
We see that every $u \in \mathcal{M}$ whose double intersects the region $\{(v,z)\in \C^{n-1}\times \CP^1 \setminus \Int B^{2n} \subset \widehat Z \mid \|v\| > 1\}$ is a standard disk, i.e. $u(z) = (v_0, \tau(z))$ for some $v_0 \in \R^{n-1}$ and a biholomorphism $\tau$ from $D^2$ to $D_+^2$.
It follows that the end of the moduli space $\mathcal{M}$,
$$
\{u\in \mathcal{M} \mid \text{$u(z)=(v_0,\tau(z))$ is standard with $v_0\in \R^{n-1}$ and $\|v_0\| > 1$}\},
$$
is naturally identified with the space $\{v \in \R^{n-1}\mid \|v\| > 1\}$.
We orient the moduli space $\mathcal{M}$ so that this identification is orientation-preserving.
\end{remark}

\subsection{Evaluation map}\label{sec: evalutation}
From now on, let $n\ge 4$ and choose generic $J\in \J$ as in Proposition~\ref{prop: eqregular}.
Consider the evaluation map $\widehat \ev\colon \mathcal{M} \times \p D^2 \rightarrow \widehat L$ given by $\widehat \ev(u, z) = u(z)$. 

\begin{proposition}\label{prop: proper}
The evaluation map $ \widehat \ev\colon \mathcal{M} \times \p D^2 \rightarrow \widehat L$ is proper and is of degree $1$.
\end{proposition}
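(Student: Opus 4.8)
The plan is to run the Eliashberg--Floer--McDuff degree argument, in the real $J$-holomorphic disk incarnation of \cite{BarGeiZeh19, McD91}, for the map $\widehat{\ev}$. First I record the bookkeeping: by Proposition~\ref{prop: eqregular} the source $\mathcal{M}\times\p D^2$ is an oriented smooth manifold of dimension $(n-1)+1=n=\dim\widehat L$; the target $\widehat L$ is orientable (established in the proof of Proposition~\ref{prop: eqregular}) and connected (since $L=\Fix(\mathcal{R})$ is a $\Z_2$-homology ball, hence connected, by Remark~\ref{rem: smith}, and $\R^{n-1}\times\RP^1\setminus\Int B^n$ is connected). Thus, once $\widehat{\ev}$ is shown to be proper, its degree is well defined and can be computed as the signed count of preimages of any single regular value. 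So it suffices to establish (i) properness of $\widehat{\ev}$, and (ii) the existence of a regular value $p\in\widehat L$ with exactly one preimage at which the local degree is $+1$.

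For (i) I would argue as follows. Let $C\subset\widehat L$ be compact and $(u_k,\zeta_k)$ a sequence with $u_k(\zeta_k)\in C$; passing to a subsequence I may assume $\zeta_k\to\zeta_\infty\in\p D^2$. The first point is that the images $u_k(D^2)$ remain in a fixed compact part of the non-compact manifold $\widehat Z$: by the maximum principle in the $\C^{n-1}$-direction together with the rigidity statement underlying Remark~\ref{rem: endofmoduli}, any $u_k$ whose double meets $\{\|v\|>1\}$ is a standard disk $u_k(z)=(v_0^k,\tau(z))$, in which case $\|v_0^k\|$ equals the norm of the first coordinate of $u_k(\zeta_k)\in C$, hence is bounded; the remaining $u_k$ have image in the fixed compact region $\widehat Z\setminus\{\|v\|>1\}$. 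Next I apply Gromov compactness: the $\widehat\Ow$-area of $u_k$ is constant (fixed homology class), so a subsequence Gromov-converges. To exclude bubbling I pass to the doubles $u_k^\#$, which by Lemma~\ref{lem: notspherical} lie in the class $[\pt\times\CP^1]$; since $\Ho_2(\widehat Z)\cong\Z$ is generated by $[\pt\times\CP^1]$ (Mayer--Vietoris, as in the proof of Lemma~\ref{lem: notspherical}) and $[\pt\times\CP^1]\cdot[H]=1$ for the $J$-complex hypersurface $H$ of \eqref{eq: cpxhyp}, positivity of intersections forces the limiting nodal sphere to consist of a single non-constant component in the class $[\pt\times\CP^1]$, with no stable ghost components; hence no interior sphere bubbles and, after halving, no boundary disk bubbles. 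Therefore $u_k\to u_\infty$ in $C^\infty$ with $u_\infty$ a simple $J$-holomorphic disk in the class $[\pt\times D^2_+]$; the identity $\widehat{\mathcal{R}}\circ u_k^\#\circ\rho_{\CP^1}=u_k^\#$ and the (closed) marked-point constraints pass to the limit, so $u_\infty\in\mathcal{M}$ and $(u_k,\zeta_k)\to(u_\infty,\zeta_\infty)$ in $\mathcal{M}\times\p D^2$. This yields properness.

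For (ii) I would choose $p=(v_0,z_0)$ with $v_0\in\R^{n-1}$, $\|v_0\|>1$, and $z_0\in\RP^1$ lying in the region foliated by standard disks. By Remark~\ref{rem: endofmoduli} any $(u,\zeta)\in\widehat{\ev}^{-1}(p)$ has $u(z)=(v_0',\tau(z))$ standard with $v_0'\in\R^{n-1}$; the conditions $u(w_i)\in\R^{n-1}\times\{z_i\}$ then force $\tau(w_i)=z_i$ for $i=1,2,\infty$, which determines the biholomorphism $\tau\colon D^2\to D^2_+$ uniquely, and $u(\zeta)=(v_0',\tau(\zeta))=p$ forces $v_0'=v_0$ and $\zeta=\tau^{-1}(z_0)$. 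Hence $\widehat{\ev}^{-1}(p)$ is a single point; near it $\mathcal{M}$ is identified orientation-preservingly (by the convention fixed in Remark~\ref{rem: endofmoduli}) with $\{v\in\R^{n-1}:\|v\|>1\}$ near $v_0$, and in these coordinates $\widehat{\ev}$ becomes $(v,\zeta)\mapsto(v,\tau(\zeta))$, a local diffeomorphism; so $p$ is a regular value and, with the orientation conventions of Remark~\ref{rem: endofmoduli}, the local degree there is $+1$. Since $\widehat L$ is connected, $\deg\widehat{\ev}=1$.

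The hard part will be the compactness analysis in step (i): one must simultaneously prevent the disks from running off to infinity in the non-compact symplectic manifold $\widehat Z$ — which I handle via the maximum principle and the rigidity of the standard foliation from \cite{BarGeiZeh19} — and exclude all bubbling, both interior sphere bubbles and boundary disk bubbles, which is exactly where the doubling trick combines with $[\pt\times\CP^1]\cdot[H]=1$ and positivity of intersections to reduce the question to the sphere analysis of \cite{BarGeiZeh19, McD91}. The remaining work — verifying that the Gromov limit genuinely lies in $\mathcal{M}$ and carrying out the orientation bookkeeping that upgrades $\pm1$ to $+1$ — is routine given the conventions already recorded.
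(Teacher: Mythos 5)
Your proposal is correct and follows the same overall strategy as the paper: reduce properness to showing that a Gromov limit of disks in the class $[\pt\times D^2_+]$ is again a single disk in $\mathcal{M}$, control everything by doubling and intersecting with the hypersurface $H$ of \eqref{eq: cpxhyp}, and then read off the degree from the standard disks of Remark~\ref{rem: endofmoduli}. The one place where you genuinely diverge is the exclusion of bubble components. The paper argues as follows: sphere bubbles of the doubled limit come in $\widehat{\mathcal{R}}$-conjugate pairs $\sigma^k,\overline\sigma^k$ with $[\sigma^k]\cdot[H]=[\overline\sigma^k]\cdot[H]$ (since $H$ is $\widehat{\mathcal{R}}$-invariant), so the parity of the total intersection number $1$ forces each sphere bubble to have zero intersection with $H$; such spheres are then shown to be constant using the maximum principle of \cite{BarGeiZeh19} together with the symplectic asphericity of $W$, and the same reasoning kills all but one doubled disk. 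You instead invoke $\Ho_2(\widehat Z)\cong\Z$ generated by $[\pt\times\CP^1]$ and conclude that the doubled limit has a single non-constant component. That route is valid, but as written it is misattributed: positivity of intersections alone does not rule out a non-constant sphere disjoint from $H$. The correct glue is that a non-constant component has positive $\widehat\Ow$-area, hence its class is $d\,[\pt\times\CP^1]$ with $d\ge 1$, and since the classes sum to $[\pt\times\CP^1]$ there is exactly one such component (and it cannot be a sphere bubble, since sphere bubbles come in conjugate pairs which are simultaneously non-constant); note that the asphericity hypothesis of Theorem~\ref{thm: main2-1} still enters your argument, but only implicitly through Eliashberg--Floer--McDuff, which gives $\Ho_2(W)=0$ and hence $\Ho_2(\widehat Z)\cong\Z$. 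With that micro-step made explicit your bubbling argument is a clean, purely homological alternative to the paper's maximum-principle-plus-asphericity step, and your treatment of the degree (explicit regular value, uniqueness of the reparametrization $\tau$ fixed by the three boundary conditions, orientation via Remark~\ref{rem: endofmoduli}, connectedness of $\widehat L$) is a slightly more detailed version of the paper's two-line conclusion.
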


\begin{proof}
For properness, it suffices to show in view of Remark~\ref{rem: endofmoduli} that if $u_{\infty}$ is a stable map described in \cite{FraZeh15} which is the Gromov-limit of a sequence $\{u_{\nu}\}$ in $\mathcal{M}$, then $u_{\infty} \in \mathcal{M}$. The map $u_{\infty}$ consists of finitely many $J$-holomorphic disks and spheres attached at finitely many points. Denote by 
$$
u^1, \dots, u^{N_1}, \sigma^1, \dots, \sigma^{N_2}
$$ 
those disks $u^j$ and spheres $\sigma^k$.  We know from \cite[Theorem~1.1]{FraZeh15} that the homology class of $[u_{\infty}]$ is still $[\pt \times D^2_+]$.

By doubling each of $u^j$'s and $\sigma^k$'s, we obtain a collection $u_{\infty}^\#$ of $J$-holomorphic spheres, which consists of  
$$
(u^1)^\#, \dots, (u^{N_1})^\#, \sigma^1, \overline \sigma^1, \dots, \sigma^{N_2}, \overline \sigma^{N_2}
$$
where $\overline \sigma^k$ is defined as in \eqref{eq: doub_sphere}. 
See Figure~\ref{fig: limit_tree}.

\begin{figure}[h]
\begin{tikzpicture}[scale=0.6]
%%Lagrangian plate
%\draw [red, thick] (-5, -1)--(-3, 1)--(5,1)--(3,-1)--(-5,-1);	

\begin{scope}[yshift=-0.5cm]
%%%discs
\draw (-0.5, 0) arc [x radius = 1.2cm, y radius = 1.3cm, start angle = 0,
  end angle = 180];
\draw (-0.5, 0) arc [x radius = 1.2cm, y radius = 0.2cm, start angle = 0,
  end angle = -180];
\draw [dashed] (-0.5, 0) arc [x radius = 1.2cm, y radius = 0.2cm, start angle = 0,
  end angle = 180];
\draw (1.9, 0) arc [x radius = 1.2cm, y radius = 1.3cm, start angle = 0,
  end angle = 180];
\draw (1.9, 0) arc [x radius = 1.2cm, y radius = 0.2cm, start angle = 0,
  end angle = -180];
\draw [dashed] (1.9, 0) arc [x radius = 1.2cm, y radius = 0.2cm, start angle = 0,
  end angle = 180];
  
%%bubbles
\draw (2.05, 1.3) circle [radius=0.6];
\draw (2.65, 1.3) arc [x radius = 0.6cm, y radius = 0.1cm, start angle = 0,
  end angle = -180];
\draw [dashed] (2.65, 1.3) arc [x radius = 0.6cm, y radius = 0.1cm, start angle = 0,
  end angle = 180];

\begin{scope}[xshift=1.2cm, yshift=-0.2cm]
\draw (2.05, 1.4) circle [radius=0.6];
\draw (2.65, 1.4) arc [x radius = 0.6cm, y radius = 0.1cm, start angle = 0,
  end angle = -180];
\draw [dashed] (2.65, 1.4) arc [x radius = 0.6cm, y radius = 0.1cm, start angle = 0,
  end angle = 180];	
\end{scope}

\begin{scope}[xshift=-1.6cm, yshift=0.5cm]
\draw (2.05, 1.4) circle [radius=0.6];
\draw (2.65, 1.4) arc [x radius = 0.6cm, y radius = 0.1cm, start angle = 0,
  end angle = -180];
\draw [dashed] (2.65, 1.4) arc [x radius = 0.6cm, y radius = 0.1cm, start angle = 0,
  end angle = 180];	
\end{scope}
\end{scope}

\draw [thick,->] (4.7, 0)--(6.5, 0);
\node at (5.5, 0.5) {double};

%%%%%%%%%%%%%%%%%%%%%%%%%%%%%%%%%%%%%%%%%%%%%%%%%%%%%2nd
\begin{scope}[xshift=10cm]
%%%discs
\draw (-0.5, 0) arc [x radius = 1.2cm, y radius = 1.3cm, start angle = 0,
  end angle = 180];
\draw (-0.5, 0) arc [x radius = 1.2cm, y radius = 0.2cm, start angle = 0,
  end angle = -180];
\draw [dashed] (-0.5, 0) arc [x radius = 1.2cm, y radius = 0.2cm, start angle = 0,
  end angle = 180];
\draw (1.9, 0) arc [x radius = 1.2cm, y radius = 1.3cm, start angle = 0,
  end angle = 180];
\draw (1.9, 0) arc [x radius = 1.2cm, y radius = 0.2cm, start angle = 0,
  end angle = -180];
\draw [dashed] (1.9, 0) arc [x radius = 1.2cm, y radius = 0.2cm, start angle = 0,
  end angle = 180];
  
%%bubbles
\draw (2.05, 1.3) circle [radius=0.6];
\draw (2.65, 1.3) arc [x radius = 0.6cm, y radius = 0.1cm, start angle = 0,
  end angle = -180];
\draw [dashed] (2.65, 1.3) arc [x radius = 0.6cm, y radius = 0.1cm, start angle = 0,
  end angle = 180];

\begin{scope}[xshift=1.2cm, yshift=-0.2cm]
\draw (2.05, 1.4) circle [radius=0.6];
\draw (2.65, 1.4) arc [x radius = 0.6cm, y radius = 0.1cm, start angle = 0,
  end angle = -180];
\draw [dashed] (2.65, 1.4) arc [x radius = 0.6cm, y radius = 0.1cm, start angle = 0,
  end angle = 180];	
\end{scope}

\begin{scope}[xshift=-1.6cm, yshift=0.5cm]
\draw (2.05, 1.4) circle [radius=0.6];
\draw (2.65, 1.4) arc [x radius = 0.6cm, y radius = 0.1cm, start angle = 0,
  end angle = -180];
\draw [dashed] (2.65, 1.4) arc [x radius = 0.6cm, y radius = 0.1cm, start angle = 0,
  end angle = 180];	
\end{scope}
	
\end{scope}

%%%%%%2nd reflection
\begin{scope}[xshift=10cm, yscale=-1]
%%%discs
\draw (-0.5, 0) arc [x radius = 1.2cm, y radius = 1.3cm, start angle = 0,
  end angle = 180];
%\draw (-0.5, 0) arc [x radius = 1.2cm, y radius = 0.2cm, start angle = 0,
  end angle = -180];
\draw [dashed] (-0.5, 0) arc [x radius = 1.2cm, y radius = 0.2cm, start angle = 0,
  end angle = 180];
\draw (1.9, 0) arc [x radius = 1.2cm, y radius = 1.3cm, start angle = 0,
  end angle = 180];
%\draw (1.9, 0) arc [x radius = 1.2cm, y radius = 0.2cm, start angle = 0,
  end angle = -180];
\draw [dashed] (1.9, 0) arc [x radius = 1.2cm, y radius = 0.2cm, start angle = 0,
  end angle = 180];
  
%%bubbles
\draw (2.05, 1.3) circle [radius=0.6];
\draw [dashed] (2.65, 1.3) arc [x radius = 0.6cm, y radius = 0.1cm, start angle = 0,
  end angle = -180];
\draw (2.65, 1.3) arc [x radius = 0.6cm, y radius = 0.1cm, start angle = 0,
  end angle = 180];

\begin{scope}[xshift=1.2cm, yshift=-0.2cm]
\draw (2.05, 1.4) circle [radius=0.6];
\draw [dashed] (2.65, 1.4) arc [x radius = 0.6cm, y radius = 0.1cm, start angle = 0,
  end angle = -180];
\draw  (2.65, 1.4) arc [x radius = 0.6cm, y radius = 0.1cm, start angle = 0,
  end angle = 180];	
\end{scope}

\begin{scope}[xshift=-1.6cm, yshift=0.5cm]
\draw (2.05, 1.4) circle [radius=0.6];
\draw [dashed] (2.65, 1.4) arc [x radius = 0.6cm, y radius = 0.1cm, start angle = 0,
  end angle = -180];
\draw (2.65, 1.4) arc [x radius = 0.6cm, y radius = 0.1cm, start angle = 0,
  end angle = 180];	
\end{scope}	
\end{scope}

\end{tikzpicture}
\caption{A Gromov-limit $u_\infty$ and its double $u_\infty^\#$}
\label{fig: limit_tree}
\end{figure}
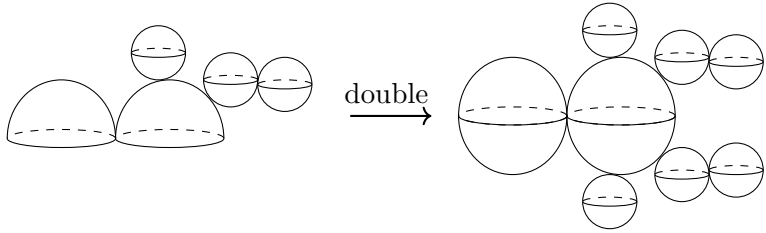
\noindent
The homology class of $u_{\infty}^\#$ then satisfies
$$
j_*[u_{\infty}^\#] = [u_{\infty}] -\widehat{\mathcal{R}}_* [u_{\infty}] = [\pt \times D^2_+] + [\pt \times D^2_-] = j_*[\pt \times \CP^1].
$$
Since $j_*$ is injective by Lemma~\ref{lem: notspherical}, it follows that
$$
[u_{\infty}^\#] = [\pt \times \CP^1] \in \Ho_2(\widehat Z).
$$
The intersection number with the complex hypersurface $H$ is given by
\begin{align}\label{eq: intersectionstabledouble}
\begin{split}
[u_{\infty}^\#] \cdot [H] &= \left(\sum_{j=1}^{N_1} [(u^j)^\#] + \sum_{k=1}^{N_2} \big([\sigma^k]+ [\overline \sigma^k]\big)\right)  \cdot [H] \\ 
&=  [\pt \times \CP^1]  \cdot [H] =1.
\end{split}
\end{align}
Observe that $[\sigma^k] \cdot [H] =  [\overline \sigma^k] \cdot [H]$ for each $1 \leq k \leq N_2$ since $H$ is invariant under~$\widehat{\mathcal{R}}$. 
By positivity of intersections, we must have $[\sigma^k] \cdot [H]  = [\overline \sigma^k] \cdot [H] = 0$ in view of \eqref{eq: intersectionstabledouble}. 
Since $\sigma^k$ and $\overline \sigma^k$ are $J$-holomorphic spheres in $\widehat Z$, applying \cite[Lemma~2.1]{BarGeiZeh19} and the assumption that $W$ is symplectically aspherical, we conclude that they are constant by the same argument as in \cite[Proposition~2.3]{BarGeiZeh19}.

Now, by positivity of intersections again, we may assume that 
$$
[(u^1)^\#] \cdot [H] = 1, \quad [(u^j)^\#] \cdot [H] = 0\quad \text{for $2\le j \le N_1$}.
$$
As before, $(u^j)^\#$ is constant for $2 \leq j \leq N_1$, and this implies that the original disk $u^j$ is constant as well. We therefore conclude that $u_{\infty}$ consists of a single $J$-holomorphic disk, and this means  $u_{\infty} \in \mathcal{M}$, as desired.

Notice from Remark~\ref{rem: endofmoduli} that the preimage $\widehat \ev^{-1}(v, z)$ of any point $(v, z) \subset \R^{n-1} \times \RP^1 \setminus \Int B^n$ with $\|v\| > 1$ consists of a single element coming from a standard disk.
Therefore the degree of $\widehat \ev$ is equal to $1$.
\end{proof}

\subsection{Proof of Theorem~\ref{thm: main2}} Observe that the evaluation map $\widehat \ev\colon \mathcal{M} \times \p D^2 \rightarrow \widehat L$ restricts~to 
$$
\ev \colon \mathcal{M} \times (\p D^2 \setminus\{w_\infty\}) \longrightarrow \widehat L \setminus (\R^{n-1} \times \{\infty\}).
$$
Indeed, for $u \in \mathcal{M}$ the double $u^\#\colon \CP^1 \rightarrow \widehat Z$ intersects only once with the complex hypersurface $H$ by positivity of intersections, and therefore the image $u(\p D^2 \setminus \{w_\infty\})$ does not intersect $\R^{n-1}\times \{\infty\} \subset H$. Moreover, $\ev$ is a proper map of degree~1 by Proposition~\ref{prop: proper}.

Now consider the following commutative diagram:
$$
\begin{tikzcd}
\mathcal{M} \times (\p D^2 \setminus \{w_\infty\}) \arrow{r}{\ev}  & \widehat L \setminus (\R^{n-1} \times \{\infty\}) \\%
\mathcal{M} \times \{w_2\} \arrow{r}{\ev} \arrow[hookrightarrow]{u}{\text{h.e.}}& \R^{n-1} \times \{z_2\} \arrow[hookrightarrow]{u}{i}
\end{tikzcd}
$$
Here h.e. means a homotopy equivalence.
Since the upper evaluation map $\ev$ is proper and of degree 1, it is straightforward to adapt the argument in \cite[Proposition~2.4]{BarGeiZeh19} to show that the induced map on homology groups is surjective. Furthermore the same argument as in \cite[Section~2.5]{BarGeiZeh19} using covering spaces shows that the induced map on the fundamental groups is also surjective. It follows that the inclusion
$$
i\colon \R^{n-1} \times \{w_2\} \longhookrightarrow \widehat L \setminus (\R^{n-1} \times \{\infty\})
$$
likewise induces surjective maps on the homology groups and fundamental groups by the commutativity of the diagram. 
Since $\widehat L \setminus (\R^{n-1} \times \{\infty\}) \overset{\text{h.e.}}{\simeq} L$, we conclude that
$\pi_1(L) = 0$ and $\tilde \Ho_k(L) = 0$ for all $k\ge 0$. For $n \geq 6$, the $h$-cobordism theorem \cite[Proposition~A in pg.\ 108]{Milnor} tells us that $L$ is diffeomorphic to the ball $B^n$. This finishes the proof of Theorem~\ref{thm: main2}.

\subsection*{Acknowledgement}
The authors cordially thank Georgios Dimitroglou Rizell for helpful comments on the first version of the paper. 
JK is supported by the National Research Foundation of Korea(NRF) grant funded by the Korea government(MSIT) (No. 2022R1F1A1074587) and the Sogang University Research Grant of 202210022.01.
MK is supported by the National Research Foundation of Korea(NRF) grant funded by the Korea government(MSIT) (No. NRF-2021R1F1A1060118).
A part of this work was done while the authors visited KIAS.
They are grateful for its warm hospitality.

\bibliographystyle{abbrv}
\bibliography{mybibfile}

\end{document}